\mathchardef\@tempa \count255 }
\newtheorem{theorem}{Theorem}[section]
\newtheorem{definition}{Definition}
\newtheorem{proposition}{Proposition}
\newtheorem{remark}{Remark}
\def\rme{{\rme}}
\def\la{\left\langle}
\def\ra{\right\rangle}
\def\R{\mathbb R}
\def\E{\mathbb E}
\def\bR{\mathbb R}
\def\bN{\mathbb N}
\def\bE{\mathbb E}
\def\bD{\mathbb D}
\def\cH{\mathcal H}
\def\cF{\mathcal F}
\def\cS{\mathcal S}
\def\Dom{\mathrm {Dom}}
\newcounter{bean}
\newcommand{\benuma}{\setlength{\labelwidth}{.25in}
	
	\begin{list}
		{(\alph{bean})}{\usecounter{bean}}}
	\newcommand{\eenuma}{\end{list}}
\begin{document}
	\title[Stochastic fractional heat equation]{On a class of stochastic fractional heat equations}
	\author[J. Song]{Jian Song}
	\address{Research Center for   Mathematics and Interdisciplinary Sciences, Shandong University, China}
	\email{txjsong@sdu.edu.cn}
	
	\author[M. Wang]{Meng Wang}
	\address{School of Mathematics, Shandong University, Jinan, Shandong, China}
	\email{wangmeng22@mail.sdu.edu.cn}
	
	\author[W. Yuan]{Wangjun Yuan}
	\address{University of Luxembourg, Department of Mathematics, Maison du Nombre 6, Avenue de la Fonte L-4364 Esch-sur-Alzette. Luxembourg.}
	\email{ywangjun@connect.hku.hk}
	\vspace{-2cm}
	\maketitle
	\vspace{-1cm}
	\begin{abstract}
		For the fractional heat equation $\frac{\partial}{\partial t} u(t,x) = -(-\Delta)^{\frac{\alpha}{2}}u(t,x)+  u(t,x)\dot W(t,x)$ where  the covariance function of the Gaussian noise $\dot W$ is defined by the heat kernel, we establish Feynman-Kac formulae for both  Stratonovich and Skorohod solutions, along with their respective moments. In particular, we prove that  $d<2+\alpha$ is a sufficient and necessary condition for the equation to have a unique square-integrable mild Skorohod solution.  One motivation lies in the occurrence of this equation in the study of a random walk in  random environment which is generated by a field of independent random walks starting from a Poisson field. 
		
		\medskip\noindent\textbf{Keywords.} Stochastic fractional heat equation; Stratonovich solution; Skorohod solution; Malliavin calculus; Feynman-Kac formula; directed polymer model \smallskip
		
		\noindent\textbf{AMS 2020 Subject Classifications.} 60H15, 35R60, 60G60
	\end{abstract}
	\section{introduction}
	Consider
	the following stochastic fractional heat equation 
	\begin{equation} \label{eq-SPDE}
		\left\{
		\begin{aligned}
			\dfrac{\partial}{\partial t} u(t,x) =&\, -(-\Delta  )^{\frac{\alpha}{2}}u(t,x)+  u(t,x)\dot W(t,x),\,\,t\geq0,x\in\R^d,\\
			u(0,x)=&\,u_0(x),
		\end{aligned}
		\right.
	\end{equation}
	where $-(-\Delta)^{\frac{\alpha}{2}}$ is the fractional Laplacian with $\alpha\in(0,2]$,  the initial value $u_0$ is a bounded continuous function,  the noise  $\dot W$ is a generalized space-time Gaussian field with covariance function given by
	\begin{align}\label{eq-covariance}
		\bE \left[ \dot W(t,x) \dot W(s,y) \right] = p_{|t-s|}(x-y),
	\end{align}
	where $p_t(x)= (2\pi t)^{-\frac{d}{2}} \exp (-\frac{|x|^2}{2t})$  for $t>0$ and $x\in\R^d$ is the heat kernel, and the product in $u\dot W$ is either an ordinary product or a Wick product.

	In this note, we shall prove Feynman-Kac formula for the Stratonovich solution of \eqref{eq-SPDE} (corresponding to the ordinary product in $u\dot W$) which has been done for the case $\alpha=2, d=1$ in \cite{SSSX2021},  and prove the existence and uniqueness of the Skorohod solution (corresponding to the Wick product in $u\dot W$)  and derive Feynman-Kac formulae for the solution and its moments.  We obtain the Feynman-Kac formulae for the Stratonovich solution of \eqref{eq-SPDE}  and its moments for $d<2$  (i.e. $d=1$ and $\alpha\in(0,2]$, see Theorem \ref{thm:FK}), prove that the Skorohod solution exists uniquely  for $d<2+\alpha$ (see Theorem \ref{Thm-solution}  and Proposition \ref{prop:sko}), and obtain the Feynman-Kac formulae for the Skorohod solution and its moments for $d<2$ and for $d<2+\alpha$ respectively (see Theorem~\ref{thm:fk-sko-u} and Theorem~\ref{thm:fk-sko-mom}). Moreover, we  show that $d<2+\alpha$ is also a necessary condition for \eqref{eq-SPDE} to have  a unique square-integrable mild Skorohod solution (see Proposition \ref{prop:sko}).

	We present a selection of related works within the SPDEs literature, recognizing that this list is not exhaustive.  It was shown in \cite{h02} that \eqref{eq-SPDE} with  space-time white noise $\dot W$  admits a unique square integrable Skorohod solution  only when $d=1$.  When the noise $\dot W$ is white in space and  colored in time,  the condition for the existence of a unique Skorohod solution of \eqref{eq-SPDE}  was identified in \cite{hn09}. In \cite {hns11} the Feynman-Kac formulae for the Stratonovich and Skorohod solutions as well as for the moments were obtained, where $\dot W$ is the partial derivative of   fractional Brownian sheet with the Hurst parameters  within $(\frac 12,1)$. The case that $\dot W$ is a general Gaussian noise was studied in  \cite{hhnt}.  All the above-mentioned works considered the case $\alpha=2$.  The result in \cite{hns11} was extended in \cite{chs12} to the case $\alpha\in(0,2]$, and to SPDEs with the differential operator  associated with a symmetric L\'evy process in \cite{S17}. For a similar model as the one in \cite{S17} with the noise white in time and colored in space, the moment formulae were established in \cite{conus13}. We also refer to 
	\cite{Walsh86,cm94, fk13, davar14, bc14,  chsx15, bc16, swy23} and the references therein for more related results.

	The stochastic heat equation with multiplicative Gaussian noise in the form of \eqref{eq-SPDE} is intimately connected with  the directed polymer model in random environment, of which the study was initiated in~\cite{AKQ} and further developed in \cite{CSZ17a,rang2020,SSSX2021,CG2023,rsw23}, etc. It was proved in~\cite{AKQ}  that for a  simple symmetric random walk in i.i.d random environment on $\mathbb N\times \mathbb Z$, the rescaled  partition function  converges weakly to the It\^o (i.e. Skorohod) solution of  \eqref{eq-SPDE} with  $d=1, \alpha=2$ and  $\dot W$ being  space-time white noise; this result was extended from a simple random walk  to a long-range random walk in \cite{CSZ17a} and correspondingly $\alpha$ belongs to $(1,2]$. When the environment is correlated in space but still independent in time, the case of simple random walk (resp. long-range random walk) was considered in \cite{rang2020}
	(resp. \cite{CG2023}), in which they proved  that the rescaled partition function converges weakly to the It\^o solution of \eqref{eq-SPDE} with $\alpha=2$ (resp. $\alpha\in(1,2]$) and  the noise being white in time and colored in space.  When the environment is correlated in time but independent in space and the random walk is possibly long-range, it was shown in \cite{rsw23} that there are two types of rescaled partition functions which converge weakly to the Skorohod solution and the Stratonovich solution respectively of \eqref{eq-SPDE} with $d=1,\alpha\in(1,2]$ and the Gaussian noise $\dot W$ being colored in time and white in space.  For a simple symmetric random walk in time-space correlated random environment generated by a Poisson field of independent random walks, the rescaled partition function was proved in \cite{SSSX2021}  to converge weakly to the Stratonovich solution of \eqref{eq-SPDE} with $d=1, \alpha=2$ and  the covariance of $\dot W$ being given by \eqref{eq-covariance}.  
	
	The study of the equation \eqref{eq-SPDE} with the covariance of the noise given by \eqref{eq-covariance} is inspired by the above-mentioned works, in particular by \cite{SSSX2021} and \cite{rsw23}.  
	In light of \cite{SSSX2021} and  \cite{rsw23}, our result suggests that for a long-range random walk in a Poisson field of independent random walks,  there should exist two  types of partition functions converging weakly  to the Stratonovich solution for $d=1$ and to the Skorohod solution for $d<2+\alpha$ respectively, as done in \cite{rsw23} (see Theorem 1.2 and Theorem~1.3 therein). 
	
	The rest of the paper is organized as follows. In section \ref{sec:pre}, some preliminary knowledge is provided. In Sections \ref{sec:Str} and \ref{sec:Skr}, the Stratonovich solution and the Skorohod solution are studied respectively.


	\section{Preliminaries} \label{sec:pre}
	In this section, we provide some facts that will be used in this note on $\alpha$-stale process and Gaussian analysis. 
	
	Let $X=\{X_t,t\ge 0\}$ be a $d$-dimensional $\alpha$-stable process associated with the fractional Laplacian $-(-\Delta)^{\frac{\alpha}2}$, which is independent of $\dot W$. Denote the transition density   function of $X$  by $g_{\alpha}(t,x)$.   Note that
	when $\alpha=2$, $g_\alpha(t,x)=p_{2t}(x)$.
	The Fourier transform of $g_{\alpha}(t,\cdot)$ with respect to the spatial variables is given by 
	\begin{align}\label{eq-Fourier-p}
		\cF g_\alpha(t,\xi) :=\int_{\R^d}\exp\big(-ix\cdot \xi \big)g_{\alpha}(t,x)dx= \exp \left( -  t |\xi|^\alpha\right).
	\end{align}
	Moreover, for $p>0$,
	\begin{align}\label{eq-integral-p}
		\int_{\R^d}\left| \cF g_\alpha(t,\xi) \right|^p d\xi=(t p)^{-\frac{d}{\alpha}}\int_{\R^d}e^{-|\xi|^\alpha}d\xi = Cp^{-\frac{d}{\alpha}} t^{-\frac{d}{\alpha}}
	\end{align}
	for some constant $C$ that depends on $\alpha$ and $d$.

	
	Let $\cH$ be the completion of the space of $C_c^\infty([0,\infty) \times \bR^d)$ of smooth functions with compact support equipped with the inner product
	\begin{align} \label{eq-H norm}
		\langle f,g \rangle_{\cH}
		=& \int_{\bR_+^2} \int_{\bR^{2d}} f(s,x) g(t,y) p_{|t-s|}(x-y) dxdy dsdt \nonumber \\
		=& \int_{\bR_+^2 } \int_{\bR^{d}} \cF f(s,\cdot) (\xi) \overline{\cF g(t,\cdot) (\xi)} \cF p_{|t-s|}(\xi) d\xi dsdt.
	\end{align}
	We denote by $\|\cdot\|_{\cH}$ the norm induced by the inner product.
	
	In a complete probability space $(\Omega,\mathcal F,P)$, define an isonormal Gaussian process $\{W(g), g \in \cH \}$  with covariance
	$   \bE \left[ W(g) W(h) \right] = \langle g,h \rangle_{\cH}$
	for all $g,h \in \cH$. In this paper, we also denote 
	\[W(g):=\int_{\R_+}\int_{\R^d}g(t,x)W(dt,dx).\]
	In particular, if $g(s,y) = {\bf 1}_A (s,y)$  where $A$ is of the form $[0,t] \times \prod_{j=1}^d [0,x_j]$, we also write  $W(t,x) = W({\bf 1}_A)$. The Gaussian noise $\dot W(t,x)$ can be identified  as the generalized derivative $\frac{\partial^{1+d}}{\partial t \partial x_1\cdots\partial x_d} W(t,x)$.

	Let  $C_p^{\infty}(\bR_+ \times \bR^d)$ be the set of all infinitely continuously differentiable functions with all partial derivatives being polynomial growth. We define the set
	\begin{align*}
		\cS = \left\{ f \left( W(g_1), \ldots, W(g_n) \right): n \in \bN_+, f \in C_p^{\infty}(\bR_+ \times \bR^d), g_1, \ldots, g_n \in \cH \right\}.
	\end{align*}
	Then we can define the Malliavin derivative $D$ on $\cS$ by
	\begin{align*}
		D f \left( W(g_1), \ldots, W(g_n) \right) \
		= \sum_{j=1}^n \dfrac{\partial f}{\partial x_j} \left( W(g_1), \ldots, W(g_n) \right) g_j.
	\end{align*}
	The operator $D$ is closable from $L^2(\Omega)$ to $L^2(\Omega;\cH)$ and we denote by $\mathbb D^{1,2}$ the closure of $\mathcal S$ under the norm 
	$$\|F\|_{1,2} = \left(\bE \left[ |F|^2 \right] +  \bE \big[ \left\| D F \right\|^2_{\cH} \big] \right)^{\frac12}.$$
	Define
	\begin{align*}
		\Dom (\mathfrak{\delta}) := \left\{ u: \left| \bE \left[ \langle DF, u \rangle_{\cH} \right] \right| \le c_u \|F\|_2, \text{ for all } F \in \bD^{1,2} \right\}.
	\end{align*}
	For $u \in \Dom(\mathfrak{\delta})$, the divergence operator (also called Skorohod integral) $\mathfrak{\delta}(u)$ defined by the following duality formula $\bE \left[ F \mathfrak{\delta}(u) \right] = \bE \left[ \langle DF, u \rangle_{\cH} \right].$
	We also write 
	\[\mathfrak{\delta}(u):= \int_{\R_+} \int_{\R^d} u(s,x)W(ds,dx).\]  
	
	Recall the following formula, which will be used in the subsequent sections,
	\begin{align}\label{formula}
		FW(\phi)=\mathfrak{\delta}(F\phi)+\la DF,\phi\ra_{\mathcal H},
	\end{align}
	for any $\phi\in \mathcal H$ and $F\in \mathbb D^{1,2}$.
	For more details, we refer to \cite{n06}.

	Throughout this note, we use $C$ to denote a generic constant which may vary in different contexts.

	\section{Stratonovich solution}\label{sec:Str}
	In this section, we will study the solution of \eqref{eq-SPDE} in the Stratonovich sense. Denote 
	\begin{align}\label{def-A}
		A_{t,x}^{\varepsilon,\delta}(r,y)=\int_0^t\psi_\delta(t-s-r)p_\varepsilon(X_s^x-y)ds,
	\end{align}
	where \[\psi_\delta(t)=\frac{1}{\delta}I_{[0,\delta]}(t),\, t\ge0,\] and $X^x_s=X_s+x$. Hence $A_{t,x}^{\varepsilon,\delta}(r,y)$ is an approximation of  $\boldsymbol{\delta}(X_{t-r}^x-y)$ for small  $\varepsilon$ and $\delta$, where $\boldsymbol{\delta}(\cdot)$ is the Dirac delta function.  It was shown in  \cite[Proposition~2.3]{SSSX2021} that $A_{t,x}^{\varepsilon,\delta}$ belongs to $\cH$ almost surely for all $\varepsilon, \delta>0$ and $$I^{\varepsilon,\delta}_{t,x}=\int_0^t\int_{\R^d}A_{t,x}^{\varepsilon,\delta}(r,y)W(dr,dy)=W(A_{t,x}^{\varepsilon,\delta})$$ is well-defined. By the same argument used in the proof of \cite[Proposition~2.3]{SSSX2021}, we can get the following result. 
	\begin{theorem}\label{convergence}
		Assume $d=1$. Then we have 
		$I_{t,x}^{\varepsilon,\delta}$ converges in $L^2$ as $(\varepsilon,\delta)\to0$ to a limit $I_{t,x}$  denoted by
		\begin{align}\label{I_{t,x}}
			I_{t,x}=:\int_0^t\int_{\R^d}\boldsymbol{\delta}(X_{t-r}^x-y)W(dr,dy)=W(\boldsymbol{\delta}(X_{t-\cdot}^x-\cdot)).
		\end{align}
		Conditional on $X$, $I_{t,x}$ is  a Gaussian random variable with mean $0$ and variance
		\begin{align}\label{cov}
			\text{Var}[I_{t,x}|X]=\int_0^t\int_0^tp_{|s-r|}(X_s-X_r) drds.
		\end{align}
	\end{theorem}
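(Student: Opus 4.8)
The plan is to condition on the $\alpha$-stable path $X$, which is independent of $W$, and to use that for each fixed realization of $X$ the approximant $A^{\varepsilon,\delta}_{t,x}$ is a deterministic element of $\cH$, so that $I^{\varepsilon,\delta}_{t,x}=W(A^{\varepsilon,\delta}_{t,x})$ is a first-chaos (Wiener) integral. Hence, conditionally on $X$, $I^{\varepsilon,\delta}_{t,x}$ is a centered Gaussian variable with variance $\|A^{\varepsilon,\delta}_{t,x}\|^2_{\cH}$, the finiteness of $\|A^{\varepsilon,\delta}_{t,x}\|_{\cH}$ being guaranteed by \cite[Proposition~2.3]{SSSX2021}. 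Everything then reduces to understanding this conditional variance and its limit.

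The first step is to compute the variance explicitly. Inserting the definition \eqref{def-A} into the spatial form of the inner product \eqref{eq-H norm} and performing the two spatial integrations by the Chapman--Kolmogorov identity $\int_{\R^{2d}}p_\varepsilon(a-y)p_{|r-r'|}(y-y')p_\varepsilon(y'-b)\,dy\,dy'=p_{2\varepsilon+|r-r'|}(a-b)$, together with $X^x_s-X^x_{s'}=X_s-X_{s'}$, gives
\[\|A^{\varepsilon,\delta}_{t,x}\|^2_{\cH}=\int_{\R_+^2}\int_0^t\int_0^t\psi_\delta(t-s-r)\psi_\delta(t-s'-r')\,p_{2\varepsilon+|r-r'|}(X_s-X_{s'})\,ds\,ds'\,dr\,dr'.\]
After the change of variables $u=t-s-r$, $u'=t-s'-r'$, the factors $\psi_\delta$ become normalized mollifiers supported on $[0,\delta]$, and since $|r-r'|=|(s-s')+(u-u')|\to|s-s'|$ as $\delta\to0$ the inner double integral concentrates; letting then $\varepsilon\to0$ I expect
\[\|A^{\varepsilon,\delta}_{t,x}\|^2_{\cH}\xrightarrow[(\varepsilon,\delta)\to0]{}V_{t,x}:=\int_0^t\int_0^t p_{|s-s'|}(X_s-X_{s'})\,ds\,ds'.\]

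Justifying this passage to the limit is the crux of the argument. Because $p_{2\varepsilon+|r-r'|}(X_s-X_{s'})\le(2\pi(2\varepsilon+|r-r'|))^{-d/2}$, which up to the $O(\delta)$ correction from the mollification is bounded by the deterministic function $(2\pi|s-s'|)^{-d/2}$, the pathwise convergence follows from dominated convergence provided $(2\pi|s-s'|)^{-d/2}$ is integrable on $[0,t]^2$; this holds precisely when $d/2<1$, i.e.\ for $d<2$, which is exactly the hypothesis $d=1$. The same bound, or the Fourier estimate $\bE[p_a(X_s-X_{s'})]=(2\pi)^{-d}\int_{\R^d} e^{-a|\xi|^2/2}e^{-c_\alpha|s-s'||\xi|^\alpha}\,d\xi\le(2\pi a)^{-d/2}$, shows $\bE[V_{t,x}]<\infty$ and upgrades the convergence to $L^1(\Omega)$. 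The only real care is needed for the near-diagonal region $\{|s-s'|\le\delta\}$, where the bound degenerates, but its area is $O(\delta)$ and its contribution vanishes as $\delta\to0$.

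Finally I would deduce the two assertions. For the $L^2$ convergence of $I^{\varepsilon,\delta}_{t,x}$ over the full probability space, the tower property and conditional Gaussianity give $\bE[|I^{\varepsilon,\delta}_{t,x}-I^{\varepsilon',\delta'}_{t,x}|^2]=\bE[\|A^{\varepsilon,\delta}_{t,x}-A^{\varepsilon',\delta'}_{t,x}\|^2_{\cH}]$, and expanding the right-hand side as $\bE\|A^{\varepsilon,\delta}\|^2_{\cH}-2\bE\langle A^{\varepsilon,\delta},A^{\varepsilon',\delta'}\rangle_{\cH}+\bE\|A^{\varepsilon',\delta'}\|^2_{\cH}$, each of the three terms converges to $\bE[V_{t,x}]$ by the computation above (the cross term produces $p_{\varepsilon+\varepsilon'+|r-r'|}$ and has the same limit), so the difference tends to $0$ and the Cauchy criterion yields the limit $I_{t,x}$. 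To identify its conditional law, for bounded measurable $\phi$ and $\lambda\in\R$ I would write $\bE[\phi(X)e^{i\lambda I^{\varepsilon,\delta}_{t,x}}]=\bE[\phi(X)e^{-\lambda^2\|A^{\varepsilon,\delta}_{t,x}\|^2_{\cH}/2}]$ and let $(\varepsilon,\delta)\to0$: the left side converges by the $L^2$ (hence in-probability) convergence of $I^{\varepsilon,\delta}_{t,x}$, and the right side by the a.s.\ convergence $\|A^{\varepsilon,\delta}_{t,x}\|^2_{\cH}\to V_{t,x}$ together with the bounded integrand, giving $\bE[e^{i\lambda I_{t,x}}\mid X]=e^{-\lambda^2 V_{t,x}/2}$. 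This is exactly the statement that, conditional on $X$, $I_{t,x}$ is centered Gaussian with variance $V_{t,x}$ as in \eqref{cov}.
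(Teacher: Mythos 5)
Your proposal is correct and follows essentially the same route as the paper, which does not spell out the argument but invokes the proof of \cite[Proposition~2.3]{SSSX2021}: condition on $X$, compute the conditional variance via the semigroup property to get $p_{2\varepsilon+|r-r'|}(X_s-X_{s'})$, pass to the limit by dominated convergence using the bound $(2\pi|s-s'|)^{-d/2}$ (integrable on $[0,t]^2$ precisely because $d=1$), verify the Cauchy criterion in $L^2$ through the cross terms, and identify the conditional Gaussian law. Your reconstruction, including the care taken with the near-diagonal region $\{|s-s'|\le\delta\}$ where the mollified bound degenerates, is a faithful and complete version of that cited argument.
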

	\begin{proof}
		For $\varepsilon, \delta, \varepsilon', \delta'>0$,  we have that by \eqref{eq-H norm},
		\begin{align}\label{eq-AA}
			\la A_{t,x}^{\varepsilon,\delta},A_{t,x}^{\varepsilon',\delta'}\ra_{\mathcal H}&=\int_{[0,t]^4}\int_{\R^{2d}}\psi_{\delta}(t-s-u)p_\varepsilon(X_s^x-y)p_{|u-v|}(y-z)\\&\hspace{5em}\times \psi_{\delta'}(t-r-v)p_{\varepsilon'}(X_r^x-z)dydzdudvdrds.	\nonumber
		\end{align}
		By using the fact that $p_t(x)\leq (2\pi t)^{-\frac 12}$ and $p_\varepsilon,p_{\varepsilon'}$  are probability density functions, we have 
		\begin{equation}
			\begin{aligned}
				\left\langle A_{t,x}^{\varepsilon,\delta}, A_{t,x}^{\varepsilon',\delta'} \right\rangle_{\cH}
				& \leq \int_{[0,t]^4} \psi_{\delta}(t-s-u) \psi_{\delta'}(t-r-v) (2\pi |u-v|)^{-\frac 12} dudvdrds\\
				&\leq C\int_0^t\int_0^t|s-r|^{-\frac 12}drds<\infty,
			\end{aligned}
		\end{equation}
		where the second inequality follows from Lemma A.3 of \cite{hns11}.
		Note that
		\[\lim_{\varepsilon,\delta,\varepsilon',\delta'\to0} \left\langle A_{t,x}^{\varepsilon,\delta}, A_{t,x}^{\varepsilon',\delta'} \right\rangle_{\cH}
		=  \int_0^t\int_0^t p_{|s-r|}(X_s-X_r)drds. \]
		By the dominated convergence theorem, we can deduce that
		\begin{align*}
			\lim_{\varepsilon,\delta,\varepsilon',\delta'\to0}\E\Big[\left\langle A_{t,x}^{\varepsilon,\delta}, A_{t,x}^{\varepsilon',\delta'} \right\rangle_{\cH}\Big]=\E\Big[\int_0^t\int_0^t p_{|s-r|}(X_s-X_r)drds\Big].
		\end{align*}
		Hence, $\{I_{t,x}^{\varepsilon,\delta}\}_{\varepsilon,\delta>0}$ is a Cauchy sequence in $L^2(\Omega)$. Then, \eqref{cov} holds since
		\begin{align*}
			\lim_{\varepsilon,\delta\to0}\E\Big[\left\langle A_{t,x}^{\varepsilon,\delta}, A_{t,x}^{\varepsilon,\delta} \right\rangle_{\cH}\Big]=\E\Big[\int_0^t\int_0^t p_{|s-r|}(X_s-X_r)drds\Big].
		\end{align*}
	\end{proof}
	
	\begin{theorem}\label{thm:exp}
		The following estimate holds if and only if $d=1$:  \begin{align}\label{exp-integrability}
			\E\left[\exp\left(\int_0^t\int_0^tp_{|s-r|}(X_s-X_r) drds\right)\right]<\infty.
		\end{align}
	\end{theorem}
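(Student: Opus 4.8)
The plan is to prove the two implications separately, writing $V:=\int_0^t\int_0^t p_{|s-r|}(X_s-X_r)\,dr\,ds$ for the conditional variance appearing in \eqref{cov}, and to use repeatedly that $p_u\ge0$ and that the heat kernel is maximised at the origin, $p_u(x)\le p_u(0)=(2\pi u)^{-d/2}$.

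For the sufficiency, i.e.\ that $d=1$ forces \eqref{exp-integrability}, I would deliberately avoid any moment expansion and instead argue by a deterministic pathwise bound. Since $p_{|s-r|}(X_s-X_r)\le(2\pi|s-r|)^{-d/2}$ for every realisation of $X$, in the case $d=1$ we obtain
\begin{align*}
V\le(2\pi)^{-1/2}\int_0^t\int_0^t|s-r|^{-1/2}\,dr\,ds=(2\pi)^{-1/2}\cdot\tfrac{8}{3}t^{3/2}=:C_t<\infty ,
\end{align*}
so $V$ is bounded by a \emph{deterministic} constant almost surely, and hence $\exp(V)\le\exp(C_t)$. This proves \eqref{exp-integrability} at once; in fact it gives the stronger conclusion that $\bE[\exp(\theta V)]<\infty$ for every $\theta>0$ when $d=1$. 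The only point to verify is the integrability of $|s-r|^{-d/2}$ across the diagonal, which holds precisely when $d/2<1$, that is exactly for $d=1$.

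For the necessity, i.e.\ that $d\ge2$ makes \eqref{exp-integrability} fail, I would use $V\ge0$ and $e^x\ge1+x$ to reduce the claim to $\bE[V]=\infty$. By Tonelli and the stationarity of the increments, together with the self-similarity $X_u\overset{d}{=}u^{1/\alpha}X_1$ of the $\alpha$-stable process, one has for $u>0$
\begin{align*}
\bE\big[p_u(X_u)\big]=(2\pi u)^{-d/2}\,\bE\Big[\exp\big(-\tfrac12 u^{2/\alpha-1}|X_1|^2\big)\Big].
\end{align*}
Since $2/\alpha-1\ge0$ for $\alpha\in(0,2]$, the function $u\mapsto\bE[\exp(-\tfrac12 u^{2/\alpha-1}|X_1|^2)]$ is nonincreasing and strictly positive, so it is bounded below on $(0,t]$ by its value at $u=t$, a constant $c_0>0$. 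Applying this with $u=|s-r|$ gives $\bE[V]\ge c_0(2\pi)^{-d/2}\int_0^t\int_0^t|s-r|^{-d/2}\,dr\,ds$, and the last integral diverges whenever $d/2\ge1$, i.e.\ for every $d\ge2$; thus $\bE[\exp(V)]\ge1+\bE[V]=\infty$.

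The argument is soft, and I expect the only mildly delicate point to be the uniform positive lower bound $c_0$ on $(0,t]$, where one should keep track of the distinction between $\alpha<2$ (there $u^{2/\alpha-1}\to0$, so the expectation tends to $1$) and $\alpha=2$ (a fixed positive Gaussian constant). Should one prefer to bypass the scaling identity, an equivalent route is to write, via Fourier inversion and \eqref{eq-Fourier-p}, $\bE[p_u(X_u)]=(2\pi)^{-d}\int_{\R^d}e^{-\frac12 u|\xi|^2-c_\alpha u|\xi|^\alpha}\,d\xi$ and bound it from below by restricting to $\{|\xi|\ge1\}$ and using $|\xi|^\alpha\le|\xi|^2$ there; this again produces a lower bound of order $u^{-d/2}$ and the same dichotomy.
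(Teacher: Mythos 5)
Your proposal is correct, and its overall architecture matches the paper's: for sufficiency, both you and the authors use the same deterministic pathwise bound $p_{|s-r|}(X_s-X_r)\le(2\pi|s-r|)^{-1/2}$, making the conditional variance bounded by a nonrandom constant (your explicit constant $\tfrac{8}{3}(2\pi)^{-1/2}t^{3/2}$ and your observation that this yields $\bE[\exp(\theta V)]<\infty$ for all $\theta>0$ are both right); for necessity, both reduce via $e^x\ge 1+x$ (the paper does this implicitly) to showing $\bE[V]=\infty$ when $d\ge 2$. Where you genuinely diverge is in how the key lower bound $\bE[p_u(X_u)]\ge c\,u^{-d/2}$ is obtained: you argue probabilistically, writing $\bE[p_u(X_u)]=(2\pi u)^{-d/2}\,\bE[\exp(-\tfrac12 u^{2/\alpha-1}|X_1|^2)]$ via stationary increments and self-similarity, then using that $2/\alpha-1\ge0$ makes the expectation nonincreasing in $u$, hence bounded below on $(0,t]$ by its value at $u=t$; the paper instead passes to Fourier space via Parseval, getting $\int_{\R^d}e^{-(s-r)(|\xi|^2+c_\alpha|\xi|^\alpha)}d\xi$ (up to constants), restricts to $\{|\xi|\ge1\}$, and uses $|\xi|^\alpha\le|\xi|^2$ there, with the complementary region $\{|\xi|<1\}$ contributing a finite amount for every $d$ — which is precisely the "alternative route" you sketch in your final sentences. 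Both arguments are sound; yours buys a cleaner probabilistic interpretation (the scaling exponent $2/\alpha-1\ge0$ is exactly why the stable fluctuations do not help integrability near the diagonal) and explicit constants, while the paper's Fourier computation has the advantage of being the same technique reused in the chaos estimates of Theorem \ref{Thm-solution}, so the two proofs there share infrastructure. Your handling of the delicate point — uniform positivity of $c_0$ on $(0,t]$, with the $\alpha=2$ versus $\alpha<2$ dichotomy — is also correct.
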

	\begin{proof}  When $d=1$, 
		clearly \eqref{exp-integrability} follows from
		\begin{align*}
			\int_0^t\int_0^tp_{|s-r|}(X_s-X_r) drds\leq \int_0^t\int_0^t(2\pi|s-r|)^{-\frac{1}{2}}drds<\infty.
		\end{align*}
		Now we show the necessity of the condition $d=1$, for which we show that $d=1$ is a necessary condition for $\E\left[\int_0^t\int_0^tp_{|r-s|}(X_s-X_r) drds\right]<\infty.$ Indeed, we have
		\begin{align*}
			&\E\left[\int_0^t\int_0^tp_{|s-r|}(X_s-X_r) drds\right]= 2\int_0^t \int_0^s\int_{\R^d} p_{s-r}(y) g_\alpha(s-r, y) dy drds\\
			&=2\int_0^t \int_0^s\int_{\R^d} e^{-(s-r) (\tfrac 12|\xi|^2+|\xi|^\alpha)} d\xi drds\ge 2\int_0^t \int_0^s\int_{|\xi|\ge1} e^{-C(s-r) (|\xi|^2+|\xi|^\alpha)} d\xi drds\\
			&\ge 2\int_0^t \int_0^s\int_{|\xi|\ge1} e^{-2C(s-r) |\xi|^2} d\xi drds. 
		\end{align*}
		Noting that \[ 2\int_0^t \int_0^s\int_{\R^d} e^{-2C(s-r) |\xi|^2} d\xi drds = C\int_0^t\int_0^s (s-r)^{-\frac d2}drds \] is finite iff $d=1$ and that $2\int_0^t \int_0^s\int_{|\xi|< 1} e^{-2C(s-r) |\xi|^2} d\xi drds$ is finite for all $d$,  we deduce that $d=1$ is a necessary condition for $\E\left[\int_0^t\int_0^tp_{|s-r|}(X_s-X_r) drds\right]<\infty$ and hence for \eqref{exp-integrability}.
	\end{proof}
	Consider the following  approximation of $\dot W(t,x)$:
	\begin{align*}
		\dot W^{\varepsilon,\delta}(t,x)
		= \int_0^t\int_{\R^d} \psi_{\delta}(t-s) p_{\varepsilon}(x-y) W(ds,dy),
	\end{align*}
	The following definition of Stratonovich integral is borrowed from \cite[Definition 4.1]{hns11}.
	\begin{definition}\label{Stratonovich}
		Given a random field $v=\{v(t,x),t\geq 0,x\in\R^d\}$ such that 
		$\int_0^T\int_{\R^d}|v(t,x)|dxdt<\infty
		$
		almost surely for all $T>0$, the Stratonovich integral $\int_0^T\int_{\R^d}v(t,x)W(dt,dx)$ is defined as the following limit in probability,	\begin{align*}\lim\limits_{\varepsilon,\delta\to 0} \int_0^T\int_{\R^d}v(t,x)\dot W^{\varepsilon,\delta}(t,x)dxdt.
		\end{align*}
	\end{definition}
	
	Let $\mathcal F_t^W$ be the $\sigma$-algebra generated by $\{W(s,x), s\in[0,t], x\in\R^d\}.$
	
	\begin{definition}\label{def:mild-Stra}   A random field $\{u(t,x),t\ge0, x\in\R^d\}$ is a mild Stratonovich solution to~\eqref{eq-SPDE} if for all $t\ge 0$ and $x\in\R^d$,  $u(t,x)$ is  $\mathcal F_t^W$-measurable and the following integral equation holds
		\begin{equation}\label{eq-midl-stra}
			u(t,x) = (g_{\alpha}(t, \cdot)*u_0)(x) +\int_0^t \int_{\R^d} g_\alpha(t-s,x-y) u(s,y) W(ds,dy),  
		\end{equation}
		where \[(g_{\alpha}(t,\cdot)*u_0)(x)=\int_{\R^d}g_{\alpha}(t,x-y)u_0(y)dy\] and the stochastic integral is in the Stratonovich sense of Definition \eqref{Stratonovich}.
	\end{definition}
	Below is the main result for mild Stratonovich solution. 
	\begin{theorem}\label{thm:FK}
		Assume $d=1$. Then,\begin{align}\label{u}
			u(t,x)=\E_X\left[u_0(X_t^x)\exp\left(\int_0^t\int_{\R}\boldsymbol{\delta}(X_{t-r}^x-y)W(dr,dy)\right)\right]
		\end{align}
		is a mild Stratonovich solution of \eqref{eq-SPDE}. Furthermore, for any positive integer $p$,
		\begin{align}\label{e:fk-mom-str}
			\bE \left[u(t,x) ^p \right]
			= \bE \bigg[ \prod_{j=1}^p u_0 (X_t^{(j)}+x) \exp \bigg( \frac 12 \sum\limits_{j,k=1}^p \int_0^t\int_0^t p_{|s-r|} \left( X^{(j)}_{s}-X^{(k)}_{r} \right) dr ds\bigg) \bigg],
		\end{align}
		where $X^{(1)},\cdots,X^{(p)}$ are independent copies of $X$.
	\end{theorem}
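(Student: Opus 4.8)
The plan is to obtain both the mild‑solution property and the moment formula by regularizing the noise and passing to the limit. For fixed $\varepsilon,\delta>0$ the field $\dot W^{\varepsilon,\delta}$ is, almost surely, continuous in $t$ and smooth in $x$, so the random equation $\partial_t u^{\varepsilon,\delta}=-(-\Delta)^{\alpha/2}u^{\varepsilon,\delta}+u^{\varepsilon,\delta}\dot W^{\varepsilon,\delta}$ is a classical linear parabolic equation with a nice potential. For such an equation both the Feynman--Kac representation and the Duhamel (mild) formula hold pathwise in $W$, giving
\[ u^{\varepsilon,\delta}(t,x)=\E_X\Big[u_0(X_t^x)\exp\Big(\int_0^t\dot W^{\varepsilon,\delta}(t-r,X_r^x)\,dr\Big)\Big] \]
together with
\[ u^{\varepsilon,\delta}(t,x)=(g_\alpha(t,\cdot)*u_0)(x)+\int_0^t\!\!\int_{\R}g_\alpha(t-s,x-y)u^{\varepsilon,\delta}(s,y)\dot W^{\varepsilon,\delta}(s,y)\,dy\,ds. \]
A stochastic Fubini argument identifies $\int_0^t\dot W^{\varepsilon,\delta}(t-r,X_r^x)\,dr=W(A_{t,x}^{\varepsilon,\delta})=I_{t,x}^{\varepsilon,\delta}$, so the regularized Feynman--Kac functional is exactly $\E_X[u_0(X_t^x)\exp(I_{t,x}^{\varepsilon,\delta})]$, the natural approximation of \eqref{u}.

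First I would pass to the limit on the left-hand side. By Theorem~\ref{convergence}, $I_{t,x}^{\varepsilon,\delta}\to I_{t,x}$ in $L^2$, and conditionally on $X$ the variance \eqref{cov} is bounded, when $d=1$, by the \emph{deterministic} quantity $\int_0^t\int_0^t(2\pi|s-r|)^{-1/2}\,ds\,dr<\infty$. This deterministic bound controls all exponential moments uniformly in $(\varepsilon,\delta)$ and yields $u^{\varepsilon,\delta}(t,x)\to u(t,x)$ in $L^2(\Omega)$, with $u(t,x)$ finite and $\mathcal F_t^W$-measurable. Since the first term of the mild formula does not depend on $(\varepsilon,\delta)$, the stochastic integral term must converge in $L^2$ to $u(t,x)-(g_\alpha(t,\cdot)*u_0)(x)$. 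To identify this limit with the Stratonovich integral of Definition~\ref{Stratonovich}, it then remains to replace $u^{\varepsilon,\delta}$ by $u$ inside the integral, i.e. to show
\[ \int_0^t\!\!\int_\R g_\alpha(t-s,x-y)\big(u^{\varepsilon,\delta}(s,y)-u(s,y)\big)\dot W^{\varepsilon,\delta}(s,y)\,dy\,ds\longrightarrow 0 \]
in probability.

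For the moment formula I would argue directly from \eqref{u}. Writing $u(t,x)^p$ as an expectation over $p$ independent copies $X^{(1)},\dots,X^{(p)}$ of $X$, all driven by the same noise $W$, gives
\[ u(t,x)^p=\E_{X^{(1)},\dots,X^{(p)}}\Big[\prod_{j=1}^p u_0(X_t^{(j)}+x)\exp\Big(\textstyle\sum_{j=1}^p I_{t,x}^{(j)}\Big)\Big]. \]
Conditionally on the $X^{(j)}$, the vector $(I_{t,x}^{(1)},\dots,I_{t,x}^{(p)})$ is a centered Gaussian vector in $W$, each entry being a Wiener integral $W(h_j)$ where $h_j$ is the $\cH$-limit of the kernels $A_{t,x}^{\varepsilon,\delta}$ built from the copy $X^{(j)}$; hence $\sum_j I_{t,x}^{(j)}=W(\sum_j h_j)$ is Gaussian with variance $\sum_{j,k}\langle h_j,h_k\rangle_\cH$. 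A direct computation of the $\cH$-inner product of the delta kernels yields $\langle h_j,h_k\rangle_\cH=\int_{[0,t]^2}p_{|s-r|}(X_s^{(j)}-X_r^{(k)})\,ds\,dr$, and the Gaussian moment generating function then produces precisely the exponential factor in \eqref{e:fk-mom-str}. Interchanging $\bE_W$ with the expectation over the copies is justified because the deterministic bound above shows the exponential factor is bounded by $\exp\big(\tfrac{p}{2}\sum_j\int_{[0,t]^2}(2\pi|s-r|)^{-1/2}\,ds\,dr\big)<\infty$ and $u_0$ is bounded.

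The main obstacle is the displayed convergence of the stochastic integral term. Since $u^{\varepsilon,\delta}-u$ is itself a nonlinear functional of $W$ that is correlated with $\dot W^{\varepsilon,\delta}$, this is not a deterministic product estimate; I expect to compute the second moment of the integral and show it vanishes, using Gaussian (Malliavin) integration by parts to resolve the correlations, together with the Fourier bound \eqref{eq-integral-p} for $g_\alpha$ and uniform-in-$(\varepsilon,\delta)$ $L^2$ estimates on $u^{\varepsilon,\delta}(s,y)$ and its increments in $(s,y)$. Establishing these uniform bounds and continuity estimates, now with the $\alpha$-stable kernel $g_\alpha$ in place of the Gaussian kernel of \cite{SSSX2021}, is the technically delicate part; the remainder is bookkeeping via the conditional Gaussian structure of Theorem~\ref{convergence}.
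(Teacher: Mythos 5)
Your proposal takes essentially the same route as the paper: regularize the noise, obtain the Feynman--Kac and Duhamel formulas for the regularized equation, pass to the limit using the exponential integrability available only for $d=1$ (the deterministic bound $p_{|s-r|}(z)\le(2\pi|s-r|)^{-1/2}$, i.e.\ Theorem~\ref{thm:exp}), and derive \eqref{e:fk-mom-str} directly from \eqref{u} via the conditional Gaussian structure --- which is exactly how the paper proceeds, except that it compresses the limit passage into a citation of \cite[Theorem 4.6]{S17} and \cite[Proposition 1.7]{SSSX2021}. The step you flag as the ``main obstacle,'' namely that $\int_0^t\int_{\R} g_\alpha(t-s,x-y)\big(u^{\varepsilon,\delta}(s,y)-u(s,y)\big)\dot W^{\varepsilon,\delta}(s,y)\,dy\,ds\to0$, is precisely what those citations supply; there it is resolved not by generic uniform $L^2$/increment estimates on $u^{\varepsilon,\delta}$ but by substituting the Feynman--Kac representation of $u$, using the Markov property of $X$, and reducing, conditionally on $X$, to explicit Gaussian computations (a Stratonovich chain-rule argument) dominated via Theorem~\ref{thm:exp} --- so your plan is sound, though the machinery you sketch for that step is heavier than what is actually needed.
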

	\begin{proof}
		Noting Theorem \ref{thm:exp}, the proof of \eqref{u} follows exactly the same argument used in \cite[Theorem 4.6]{S17} (see also the proof of \cite[Proposition 1.7]{SSSX2021}).  We sketch the proof below for the reader's convenience. 
		
		Consider the following approximation equation
		\begin{equation} \label{eq-SPDE-appro1}
			\left\{
			\begin{aligned}
				\dfrac{\partial}{\partial t} u^{\varepsilon,\delta}(t,x) =&\, -(-\Delta  )^{\frac{\alpha}{2}}u^{\varepsilon,\delta}(t,x) + u^{\varepsilon,\delta} (t,x) \dot W^{\varepsilon,\delta}(t,x),\\
				u^{\varepsilon,\delta}(0,x)=&\,u_0(x).
			\end{aligned}
			\right.
		\end{equation}
		By the classical Feynman-Kac formula, we know that
		\begin{align*}
			u^{\varepsilon,\delta}(t,x)=&\E_X\Big[u_0(X_t^x)\exp\Big(\int_0^t\dot W^{\varepsilon,\delta}(t-s,X_s^x)ds\Big)\Big]\\=&\E_X\Big[u_0(X_t^x)\exp\Big(W(A_{t,x}^{\varepsilon,\delta})\Big)\Big]=\E_X\Big[u_0(X_t^x)\exp\big(I_{t,x}^{\varepsilon,\delta}\big)\Big],
		\end{align*}
		is a mild Stratonovich solution of \eqref{eq-SPDE-appro1}.
		It follows from Theorem \ref{convergence} that $\exp\big(I_{t,x}^{\varepsilon,\delta}\big)$ converges to $\exp\big(I_{t,x}\big)$ in probability. Note that for any $\lambda \in \mathbb R$,
		\begin{align*}
			&\E\Big[\exp\big(I_{t,x}^{\varepsilon,\delta}\big)\Big]=\E\Big[\exp\Big(\tfrac 12 \lambda^2 \|A_{t,x}^{\varepsilon,\delta}\|_{\mathcal H}^2\Big)\Big]\\&\leq \E\Big[\exp\Big(\tfrac 12 \lambda^2 \int_0^t\int_0^t|s-r|^{-\frac 12}drds\Big)\Big]<\infty.
		\end{align*}
		Hence, we deduce that for any $p>0$, $u^{\varepsilon}(t,x)$ converges to $u(t,x)$ for any $t>0$ and $x\in \R$ as $\varepsilon,\delta$ tend to zero in $L^p$. 
		According to Definition \ref{def:mild-Stra},
		\begin{align}\label{mild solution appro1}
			u^{\varepsilon,\delta}(t,x) = (g_{\alpha}(t,\cdot)*u_0)(x) + \int_0^t \int_{\bR} g_\alpha(t-s,x-y) u^{\varepsilon,\delta}(s,y) \dot W^{\varepsilon,\delta}(s,y)dsdy.
		\end{align}
		To prove the desired result of this theorem, it suffices to show that
		\begin{align*}
			\Phi^{\varepsilon,\delta}:=\int_0^t\int_{\R}g_\alpha(t-s,x-y) \big(u^{\varepsilon,\delta}(s,y)-u(s,y)\big) \dot W^{\varepsilon,\delta}(s,y)dsdy
		\end{align*}
		converges to $0$ in probability as $\varepsilon,\delta\to 0$. Applying the formula \eqref{formula} to $\big(u^{\varepsilon,\delta}(s,y)-u(s,y)\big) W\big(\varphi_{s,y}^{\varepsilon,\delta}(r,z)\big)$  with $\varphi_{s,y}^{\varepsilon,\delta}(r,z)=\psi_{\delta}(s-r)p_{\varepsilon}(y-z)$ yields that
		\begin{align*}
			\Phi^{\varepsilon,\delta}=&\mathfrak{\delta}\Big(\int_0^t\int_{\R}g_\alpha(t-s,x-y) \big(u^{\varepsilon,\delta}(s,y)-u(s,y)\big)\varphi_{s,y}^{\varepsilon,\delta}(\cdot,\cdot) dyds\Big)\\&+\int_0^t\int_{\R}g_\alpha(t-s,x-y)\la D\big(u^{\varepsilon,\delta}(s,y)-u(s,y)\big),\varphi_{s,y}^{\varepsilon,\delta}\ra_{\mathcal H}dyds.
		\end{align*}
		
		By using relevant tools from Malliavin calculus, we can prove that $\Phi^{\varepsilon,\delta}$ converges to $0$ in $L^1$ as $\varepsilon,\delta\to 0$. We refer to \cite[Theorem 4.6]{S17} (see also the proof of \cite[Proposition 1.7]{SSSX2021}) for more details.

		Finally, the formula \eqref{e:fk-mom-str} is a direct consequence of \eqref{u}. 
	\end{proof}

	\section{Skorohod solution}\label{sec:Skr}
	In this section, we provide a sufficient and necessary condition for the existence and uniqueness of a square-integrable mild Skorohod solution of \eqref{eq-SPDE},  and prove Feynman-Kac formulae for the solution and its moments. 
	
	
	\subsection{The existence and uniqueness of the Skorohod solution}
	
	\begin{definition}\label{mild solution}
		A random field $u = \{ u(t,x), t \ge 0, x \in \bR^d \}$ is a mild Skorohod solution of  \eqref{eq-SPDE}  if for all $t \ge 0$ and $x \in \bR^d$, $u(t,x)$ is $\mathcal F_t^W$-measurable and satisfies the following integral equation:
		\
		\begin{align}\label{mild solution u}
			u(t,x) = (g_{\alpha}(t,\cdot)*u_0)(x) +  \int_0^t \int_{\bR^d} g_\alpha(t-s,x-y) u(s,y) W(ds,dy),
		\end{align}
		where  the stochastic integral is in the Skorohod sense.
	\end{definition}
	
	If \eqref{eq-SPDE} admits a solution $u$, then by iteration, one can formally write
	\begin{align} \label{eq-series}
		u(t,x) = (g_{\alpha}(t,\cdot)*u_0)(x) + \sum_{n=1}^\infty I_n(\tilde f_n),
	\end{align}
	where the function $f_n$ is given by
	\begin{align*}
		f_n(x_1,s_1, \ldots, x_n,s_n,x,t) = \prod_{j=1}^n g_\alpha( s_{j+1}-s_j,x_{j+1}-x_j) (g_{\alpha}(s_1,\cdot)*u_0)(x_1) {\bf 1}_{\{0<s_1<\ldots<s_n<t\}},
	\end{align*}
	with convention $s_{n+1} = t$ and $x_{n+1} = x$, $\tilde f_n$ is the symmetrization of $f_n$, i.e.
	\begin{align*}
		\tilde f_n(x_1,s_1, \ldots, x_n,s_n,x,t)
		=&\dfrac{1}{n!} f_n(x_{\tau(1)},s_{\tau(1)}, \ldots, x_{\tau(n)},s_{\tau(n)},x,t),
	\end{align*}
	where $\tau $ is the permutation such that $0<s_{\tau(1)}<\ldots<s_{\tau(n)}<t$
	and $I_n(\cdot)$ is the multiple Wiener integral given by
	\begin{align*}
		I_n(f_n) = \int_{\bR_+^n} \int_{\bR^{nd}} f_n(x_1,s_1, \ldots, x_n,s_n,x,t) W(ds_1,dx_1) \ldots W(ds_n,dx_n).
	\end{align*}
	Noting the expansion \eqref{eq-series} and the uniqueness of the Wiener chaos expansion, we have that the existence of a  unique 
	\emph{square-integrable} mild Skorohod solution to \eqref{eq-SPDE} is equivalent to
	\begin{align} \label{ineq-series-converge}
		\bE \left[\big|(g_{\alpha}(t,\cdot)*u_0)(x)\big|^2 \right] + \sum_{n=1}^{\infty} n! \left\| \tilde f_n(\cdot,t,x) \right\|_{\cH^{\otimes n}}^2 < \infty,
	\end{align}
	for all $(t,x) \in [0,T] \times \bR^d$. 
	
	In the following result, we show that $d<2+\alpha$ is a sufficient condition for the existence and uniqueness of a square-integrable mild Skorohod solution.
	
	\begin{theorem} \label{Thm-solution}
		Assume $d<2+\alpha$. Then the condition  \eqref{ineq-series-converge} is satisfied and hence  equation \eqref{eq-SPDE} admits a unique square-integrable  mild Skorohod solution.
	\end{theorem}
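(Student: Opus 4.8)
The plan is to verify the summability criterion \eqref{ineq-series-converge} directly. The first term is deterministic and harmless: since $u_0$ is bounded, $\bE\big[|(g_\alpha(t,\cdot)*u_0)(x)|^2\big]=|(g_\alpha(t,\cdot)*u_0)(x)|^2\le\|u_0\|_\infty^2$. For the series, the first reduction I would make is to dispose of $u_0$. Because $p\ge0$ and $g_\alpha\ge0$, bounding $|u_0|\le\|u_0\|_\infty=:M$ pointwise inside the non-negative integrand defining $\langle f_n, f_n\circ\sigma\rangle_{\cH^{\otimes n}}$ gives
\begin{align*}
n!\,\|\tilde f_n\|^2_{\cH^{\otimes n}}=\sum_{\sigma\in S_n}\langle f_n, f_n\circ\sigma\rangle_{\cH^{\otimes n}}\le M^2\, n!\,\|\tilde h_n\|^2_{\cH^{\otimes n}},
\end{align*}
where $h_n$ is the same kernel with $u_0$ replaced by $1$, i.e. a pure chain of transition densities $\prod_k g_\alpha(s_{k+1}-s_k, x_{k+1}-x_k)$ on the time simplex. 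It therefore suffices to prove $\sum_n n!\,\|\tilde h_n\|^2_{\cH^{\otimes n}}<\infty$.

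Next I would pass to the spectral side. Taking the spatial Fourier transform of the chain $h_n$ and using the second expression in \eqref{eq-H norm} for the $\cH^{\otimes n}$ inner product, the covariance contributes factors $\cF p_{|s_j-r_j|}(\xi_j)=e^{-|s_j-r_j||\xi_j|^2/2}$ while each stable link contributes $e^{-c_\alpha(s_{k+1}-s_k)|\xi_1+\cdots+\xi_k|^\alpha}$. Carrying out the symmetrization, $n!\,\|\tilde h_n\|^2_{\cH^{\otimes n}}$ is expressed as a single integral over $[0,t]^{2n}\times\bR^{nd}$ of a product of such stable and Gaussian exponentials; equivalently, a direct computation identifies it with $\tfrac{1}{n!}\bE[Y^n]$ for $Y=\int_0^t\int_0^t p_{|s-r|}(X^{(1)}_s-X^{(2)}_r)\,ds\,dr$ with $X^{(1)},X^{(2)}$ independent copies of $X$, so that $\sum_n n!\,\|\tilde h_n\|^2=\bE[e^Y]$.

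The crux is the single-link estimate, and it is here that the precise range $d<2+\alpha$ is forced. The representative integral is
\begin{align*}
\int_0^t\int_0^t\int_{\bR^d}e^{-c_\alpha(u+w)|\xi|^\alpha}\,e^{-|u-w||\xi|^2/2}\,d\xi\,du\,dw,
\end{align*}
and the point is that neither decay alone suffices: discarding the Gaussian factor leaves $\int_{\bR^d}e^{-c(u+w)|\xi|^\alpha}d\xi\sim(u+w)^{-d/\alpha}$, integrable only for $d<\alpha$, while discarding the stable factor leaves $|u-w|^{-d/2}$, integrable only for $d<2$. Keeping both and using $\int_{\bR^d}e^{-cv|\xi|^\alpha-\rho|\xi|^2/2}\,d\xi\le C\min\big(v^{-d/\alpha},\rho^{-d/2}\big)$ with $v=u+w$ and $\rho=|u-w|\le v$, the inner $\rho$-integral near the temporal diagonal behaves like $v^{(2-d)/\alpha}$, so the time integral converges precisely when $(2-d)/\alpha>-1$, i.e. $d<2+\alpha$; morally, the Gaussian covariance supplies ``two dimensions'' and the stable kernel ``$\alpha$ dimensions'' of integrability, and they add.

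Finally, I would promote the single-link bound to the full series. Writing $\bE[Y^n]$ as $n!$ times the integral over the ordered simplex $0<s_1<\cdots<s_n<t$ and telescoping the stable-process expectation by the Markov property reduces the $n$-fold integral to a product of single-link factors, each contributing the finite value above together with a gain of a positive power $\gamma=(2+\alpha-d)/\alpha$ of the remaining time. The iterated fractional-power time integral produces a factor $\Gamma(\gamma n+1)^{-1}$, yielding $n!\,\|\tilde h_n\|^2\le C^n t^{\gamma n}/\Gamma(\gamma n+1)$ and hence a convergent Mittag--Leffler-type series for all $t$. This establishes \eqref{ineq-series-converge} and, by the equivalence noted before the statement, the existence and uniqueness of the mild Skorohod solution. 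The main obstacle is precisely the single-link estimate together with the bookkeeping needed to run the Markov telescoping while the $r$-variables remain unordered; a symmetrization or Cauchy--Schwarz step in the two stable-process expectations is what makes the per-link reduction legitimate.
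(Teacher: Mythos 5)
Your proposal is correct in strategy and takes a genuinely different route from the paper. The paper stays entirely on the Fourier side: after expressing $n!\|\tilde f_n\|_{\cH^{\otimes n}}^2$ via \eqref{eq-H norm}, it invokes the maximum principle of \cite[Lemma 4.9]{S17} to decouple the frequency variables, applies H\"older's inequality with exponents $\frac2p+\frac1q=1$ to split the two stable factors from the Gaussian covariance factor, uses Lemma 5.1.1 of \cite{n06} to handle the fact that the $s$-ordering and the $r$-ordering do not match, and finishes with Gamma-function asymptotics; the threshold $d<2+\alpha$ emerges from optimizing three exponent conditions at $p=(4+2\alpha)/\alpha$, $q=1+\alpha/2$. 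You instead recast the series probabilistically: $\sum_n n!\|\tilde h_n\|^2_{\cH^{\otimes n}}=\bE[e^Y]$ with $Y=\int_{[0,t]^2}p_{|s-r|}(X^{(1)}_s-X^{(2)}_r)\,ds\,dr$ for two independent copies (this identity is legitimate by Tonelli, all kernels being non-negative, and is in fact the $p=2$ case of the paper's later moment formula \eqref{eq-FK-moment}), and you extract the threshold from the single-link bound $\int_{\R^d}e^{-cv|\xi|^\alpha-\rho|\xi|^2/2}\,d\xi\le C\min(v^{-d/\alpha},\rho^{-d/2})$, whose $\rho$-integral gives $v^{(2-d)/\alpha}$ and hence integrability exactly for $d<2+\alpha$. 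Your ``min'' bound plays precisely the role of the paper's H\"older interpolation, and is more transparent about why the Gaussian covariance and the stable semigroup contribute $2$ and $\alpha$ ``additively.''

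The one genuine gap is your final step, and you half-acknowledge it: you propose ``symmetrization or Cauchy--Schwarz in the two stable-process expectations'' to make the per-link reduction legitimate while the $r_j$ remain unordered, but neither tool is carried out, and Cauchy--Schwarz is in fact not the right one (squaring heat kernels worsens the singularities). The clean fix is conditioning: order only the $s$-variables (cost $n!$), condition on the entire path of $X^{(2)}$ so that $z_j:=X^{(2)}_{r_j}$ are frozen points, and telescope the $X^{(1)}$-expectation backwards over $s_1<\dots<s_n$ using the Markov property. Integrating out the last position gives
\begin{equation*}
\int_{\R^d} g_\alpha(s_n-s_{n-1},x_n-x_{n-1})\,p_{|s_n-r_n|}(x_n-z_n)\,dx_n
\le \bigl\| g_\alpha(s_n-s_{n-1},\cdot)*p_{|s_n-r_n|}\bigr\|_\infty
\le C\min\bigl((s_n-s_{n-1})^{-\frac d\alpha},|s_n-r_n|^{-\frac d2}\bigr),
\end{equation*}
\emph{uniformly} in $x_{n-1}$ and $z_n$, and induction yields the product bound $C^n\prod_j\min((s_j-s_{j-1})^{-d/\alpha},|s_j-r_j|^{-d/2})$ with no dependence on the $z_j$ at all. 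Hence the $X^{(2)}$-expectation is trivial, each $r_j$ integrates over $[0,t]$ \emph{independently} (no ordering of the $r$'s is ever needed), producing $C(s_j-s_{j-1})^{(2-d)/\alpha}$ when $2<d<2+\alpha$ (a constant, or a logarithm, when $d\le 2$), and the simplex integral gives exactly your Mittag--Leffler bound $n!\|\tilde h_n\|^2\le C^n t^{\gamma n}/\Gamma(\gamma n+1)$, $\gamma=(2+\alpha-d)/\alpha$. With that paragraph inserted, your argument is a complete and self-contained alternative to the paper's proof, avoiding both the maximum principle and Nualart's Lemma 5.1.1.
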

	
	\begin{proof} Without loss of generality, we assume that $u_0 (x)\equiv 1$.
		For $n \ge 1$, by \eqref{eq-H norm}, we have
		\begin{align}\label{eq-convergence for series}
			& n! \left\| \tilde f_n(\cdot,t,x) \right\|_{\cH^{\otimes n}}^2\nonumber\\ 
			&= \frac{1}{n!} \int_{[0,t]^{2n}} \int_ {\bR^{2nd}} f_n(x_{\tau(1)},s_{\tau(1)}, \ldots, x_{\tau(n)},s_{\tau(n)},x,t) \prod_{j=1}^n  p_{|s_{j}-r_{j}|}(x_{j}-y_{j} )\nonumber\\
			&\hspace{6em}\times f_n(y_{\sigma(1)},r_{\sigma(1)}, \ldots, y_{\sigma(n)},r_{\sigma(n)},x,t)   d\mathbf x d\mathbf y d\mathbf r d\mathbf s\nonumber \\
			&=\frac{1}{n!} \int_{[0,t]^{2n}}\int_{\bR^{nd}} \cF f_n(\cdot,s_{\tau(1)}, \ldots, \cdot,s_{\tau(n)},x,t) (\xi_1,\ldots,\xi_n) \prod_{j=1}^n \cF p_{|s_{j}-r_{j}|}(\xi_j)  \\
			&\hspace{6em} \times \overline{\cF f_n(\cdot,r_{\sigma(1)}, \ldots, \cdot,r_{\sigma(n)},x,t) (\xi_1,\ldots,\xi_n)} d\boldsymbol{\xi}d\mathbf r d\mathbf s\nonumber
			\\ &\leq \frac{1}{n!} \int_{[0,t]^{2n}}\int_{\bR^{nd}} \prod_{j=1}^n\exp\left(-\big[(s_{\tau(j+1)}-s_{\tau(j)})+(r_{\sigma(j+1)}-r_{\sigma(j)})\big]|\xi_j|^\alpha\right)\nonumber\\
			&\hspace{6em} \times\prod_{j=1}^n\exp\big({-\tfrac 12|s_{j}-r_{j}||\xi_j|^2}\big)d\boldsymbol{\xi}d\mathbf r d\mathbf s\nonumber,
		\end{align}
		with $0<s_{\tau(1)}<\ldots<s_{\tau(n)}<t$ and $0<r_{\sigma(1)}<\ldots<r_{\sigma(n)}<t$.  The last inequality in \eqref{eq-convergence for series} follows from \eqref{eq-Fourier-p}, maximum principle in \cite[Lemma 4.9]{S17}  and the fact that
		\begin{align*}
			\cF f_n(\cdot,s_{\tau(1)}, \ldots, \cdot,s_{\tau(n)},x,t) (\xi_1,\ldots,\xi_n)
			= \exp \left(-ix \cdot\sum_{j=1}^n \xi_j \right) \prod_{j=1}^n \cF g_\alpha(s_{\tau(j+1)}-s_{\tau(j)}, \xi_1+\cdots+\xi_j).
		\end{align*}
		Let $p, q>0$  such that $\frac{2}{p}+\frac{1}{q}$=1. Then by utilizing   H\"older inequality and \eqref{eq-integral-p}, we have that for some constant $C$ depending on $p,q,\alpha,d$, 
		\begin{align}
			& n! \left\| \tilde f_n(\cdot,t,x) \right\|_{\cH^{\otimes n}}^2\nonumber\\
			&  \leq\frac{1}{n!} \int_{[0,t]^{2n}}\bigg(\int_{\bR^{nd}} \prod_{j=1}^n\exp\left(- p(s_{\tau(j+1)}-s_{\tau(j)})|\xi_j|^\alpha\right)d\boldsymbol{\xi}\bigg)^{\frac{1}{p}}\bigg(\int_{\bR^{nd}} \prod_{j=1}^n\exp\left(-\tfrac 12q|s_{j}-r_{j}||\xi_j|^2\right)d\boldsymbol{\xi}\bigg)^{\frac{1}{q}}
			\nonumber\\&\hspace{4em}\times \bigg(\int_{\bR^{nd}} \prod_{j=1}^n\exp\left(-p(r_{\sigma(j+1)}-r_{\sigma(j)})|\xi_j|^\alpha\right)d\boldsymbol{\xi}\bigg)^{\frac{1}{p}}d\mathbf r d\mathbf s\nonumber \\
			&=\frac{1}{n!}C^n \int_{[0,t]^{2n}} \prod_{j=1}^n(s_{\tau(j+1)}-s_{\tau(j)})^{-\frac{d}{p\alpha}}\prod_{j=1}^n(r_{\sigma(j+1)}-r_{\sigma(j)})^{-\frac{d}{p\alpha}}\prod_{j=1}^n|s_{j}-r_{j}|^{-\frac{d}{2q}}d\mathbf rd\mathbf s\label{eq-H-norm}\\
			& \le\frac{1}{n!}C^n\bigg(\int_{[0,t]^{n}} \prod_{j=1}^n\left(s_{\tau(j+1)}-s_{\tau(j)}\right)^{-\frac{d}{p\alpha}\times \frac{2}{(2-\frac{d}{2q})}}d\mathbf s\bigg)^{2-\frac{d}{2q}}\label{last inequality}\\
			&=\frac{1}{n!}C^n(n!)^{2-\frac{d}{2q}}\bigg(\int_{\{0<s_1<\cdots<s_n<t\}} \prod_{j=1}^n\left(s_{j+1}-s_{j}\right)^{-\frac{d}{p\alpha}\times \frac{2}{(2-\frac{d}{2q})}}d\mathbf s\bigg)^{2-\frac{d}{2q}}\label{second equality}\\
			& = C^n(n!)^{1-\frac{d}{2q}}{ t^{n-\frac{2nd}{p\alpha(2-\frac{d}{2q})}}}\frac{\left(\Gamma(1-\frac{2d}{p\alpha(2-\frac{d}{2q})})\right)^{n(2-\frac{d}{2q})}}{\left(\Gamma(n-\frac{2nd}{p\alpha(2-\frac{d}{2q})}+1)\right)^{2-\frac{d}{2q}}}.\label{last equality}
		\end{align}
		
		In the computation above, we assume $\alpha, d, p,q$ with $\frac2p +\frac1q =1$
		satisfy the following three conditions:
		\begin{align*}
			&-\frac{d}{2q}\in (-1,0)\Longleftrightarrow d<2q,\\
			&
			\frac{2d}{p\alpha(2-\frac{d}{2q})}<1 \Longleftrightarrow d<\frac{4pq\alpha}{4q+p\alpha },\\ 
			&\Big(1-\frac{2d}{p\alpha(2-\frac{d}{2q})}\Big)(2-\frac{d}{2q})>1-\frac{d}{2q} \Longleftrightarrow d<\frac{p\alpha}{2}.
		\end{align*}
		where the first condition is due to  Lemma 5.1.1 in \cite{n06} by which we deduce \eqref{last inequality},  the second one  ensures  the integral in \eqref{second equality} is finite,  and the third one  is used  to obtain the convergence of the series.
		It turns out that  if we choose $p = (4+2\alpha)/\alpha$ and  $q=1+\frac{\alpha}{2}$,  those three conditions are satisfied, and we get the optimal condition $d<2+\alpha$ which is assumed in this theorem. The last equality \eqref{last equality} follows from Lemma C.3 of \cite{rsw23},  from which we  deduce \eqref{ineq-series-converge}  by Stirling's formula and the condition $d<2+\alpha.$ 
		The proof is complete.
	\end{proof}
	
	The following proposition indicates that the condition $d<2+\alpha$ in Theorem \ref{Thm-solution} is also necessary.  
	\begin{proposition}\label{prop:sko}
		The condition \eqref{ineq-series-converge} holds only if $ d<2+\alpha$.    
	\end{proposition}
	
	\begin{proof} Assume $u_0(x)\equiv1$.  Consider the second moment of the  first chaos,
		\begin{align*}
			\|\tilde f_1(\cdot, t,x)\|_{\mathcal H}^2=&2 \int_{\{0<r<s<t\}} \int_{\R^{2d}}p_{s-r}(y-z) g_\alpha(s,y)g_\alpha(r,z)  dydz drds \\
			=& 2\int_{\{0<r<s<t\}} \int_{\R^d} \exp\left(-\frac 12(s-r)|\xi|^2 -(s+r)|\xi|^\alpha    \right) d\xi dr ds.
		\end{align*}

		Taking the change of variables $u=r, v=s-r$, and choosing $M>3^{\frac{1}{2-\alpha}}$ if $\alpha<2$, and $M>1$ if $\alpha=2$, we have
		\begin{align*}
			&\int_{\{0<r<s<t\}} \int_{\{|\xi|\ge M\}} \exp\left(-\frac 12(s-r)|\xi|^2 -(s+r)|\xi|^\alpha    \right) d\xi dr ds\\&=\int_{\left\{\substack{u,v>0\\0<u+v<t}\right\}}\int_{\{|\xi|\ge M\}}\exp\left(-\frac 12v|\xi|^2-(v+2u)|\xi|^{\alpha}\right)d\xi drds\\&=\int_{\{|\xi|\ge M\}}\int_0^t\exp\left(-v(\tfrac 12 |\xi|^2+|\xi|^{\alpha})\right)dv\int_0^{t-v}
			\exp\left(-2u|\xi|^{\alpha}\right) du d\xi\\
			&=\int_{\{|\xi|\ge M\}}\frac{1}{2|\xi|^\alpha}\left\{\frac{1-\exp\left(-t(\frac 12 |\xi|^2+|\xi|^\alpha)\right)}{\frac12 |\xi|^2+|\xi|^\alpha}-\frac{\exp\left(-2t|\xi|^{\alpha}\right)-\exp\left(-t(\frac 12|\xi|^2+|\xi|^\alpha)\right)}{\frac12 |\xi|^2-|\xi|^\alpha}\right\}d\xi
		\end{align*}
		which is finite iff $d<2+\alpha$, noting that for $\alpha\in(0,2]$,
		\[\left\{\frac{1-\exp\left(-t(\frac 12 |\xi|^2+|\xi|^\alpha)\right)}{\frac12 |\xi|^2+|\xi|^\alpha}-\frac{\exp\left(-2t|\xi|^{\alpha}\right)-\exp\left(-t(\frac 12|\xi|^2+|\xi|^\alpha)\right)}{\frac12 |\xi|^2-|\xi|^\alpha}\right\} \sim \frac{C}{|\xi|^2}\]
		as $|\xi|\to \infty$,  for some $C\in(0,\infty)$. 
		
		On the other hand, we have  \[2\int_{\{0<r<s<t\}} \int_{\{|\xi|<M\}} \exp\left(-\frac 12(s-r)|\xi|^2 -(s+r)|\xi|^\alpha    \right) d\xi dr ds\] is finite for any $M\in(0,\infty)$ and thus   the desired result follows. 
	\end{proof}
	
	\begin{remark} Assume $u_0(x)\equiv1$. For the second moment of the $n$-th chaos, a direct calculation shows that 
		\[n!\|\tilde f_n(\cdot, t,x)\|_{\mathcal H^{\otimes n}}^2 =\frac1{n!} \E\left[\left(\int_0^t \int_0^t p_{|s-r|}(X_s-\tilde X_r)drds\right)^n \right],\]
		where $\tilde X$ is an independent copy of $X$, and consequently we have  \begin{equation}\label{e:FK-2mom}
			\E[u(t,x)^2]=\sum_{n=0}^\infty n!\|\tilde f_n(\cdot, t,x)\|_{\mathcal H^{\otimes n}}^2 =\E\left[\exp\left(\int_0^t \int_0^t p_{|s-r|}(X_s-\tilde X_r)drds\right)\right]. 
		\end{equation}
		Thus, we have $d<2+\alpha$ is a sufficient and necessary condition for 
		\[\E\left[\exp\left(\int_0^t \int_0^t p_{|s-r|}(X_s-\tilde X_r)drds\right)\right]<\infty.\]
		Recall that $\E\left[\exp\left(\int_0^t \int_0^t p_{|s-r|}(X_s-X_r)drds\right)\right]<\infty$ iff $d=1$ by   Theorem \ref{thm:exp}.
	\end{remark}
	
	\begin{remark}
		When the covariance function of the noise $\dot W$ is given by 
		\[ \E[\dot W(t,x) \dot W(s,y) ]=|t-s|^{2H-2}|x-y|^{-\beta}\] with $H\in(\frac12,1)$ and $\beta\in(0,d)$,   the necessary and sufficient condition $\beta<\alpha$ for the corresponding fractional SHE to have a unique square-integrable Skorohod solution is obtained in \cite{bc14} by using the large deviation result of $\int_0^t\int_0^t|X_s-\tilde X_r|^{-\beta}drds$, and the precise long-term asymptotics for the moments of the solution was obtained in \cite{chsx15, CHSS18}, by using the Feynman-Kac type large deviation result for the 
		Hamiltonian $\int_0^t\int_0^t|s-r|^{2H-2}|X_s-\tilde X_r|^{-\beta}drds$ established therein. 
	\end{remark}

	\subsection{Feynman-Kac formulae}
	
	In this subsection, we present Feynman-Kac type representations for the Skorohod solution and its moments. 
	\begin{theorem}\label{thm:fk-sko-u}
		When $d=1$, the process 
		\begin{align}\label{e:FK-skorohod}
			u(t,x)=\E_X\left[u_0(X_t^x)\exp\left(\int_0^t\int_0^t\boldsymbol{\delta}(X_{t-r}^x-y)W(dr,dy)-\frac12 \int_0^t\int_0^tp_{|s-r|}(X_s-X_r)drds\right)\right]
		\end{align}
		is the unique mild Skorohod solution to \eqref{eq-SPDE}.
	\end{theorem}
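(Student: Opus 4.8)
The plan is to compute the Wiener chaos expansion of the right-hand side of \eqref{e:FK-skorohod} and match it term by term with the expansion \eqref{eq-series} of the mild Skorohod solution, whose existence and uniqueness for $d=1<2+\alpha$ is already guaranteed by Theorem \ref{Thm-solution}. Since $X$ is independent of $W$, I would first condition on $X$ and regard $h_{t,x}^X:=\delta(X_{t-\cdot}^x-\cdot)$ as a random but $W$-independent element of $\cH$; by Theorem \ref{convergence} it is the $\cH$-limit (conditional on $X$, for a.e.\ realization) of the approximants $A_{t,x}^{\varepsilon,\delta}$ of \eqref{def-A}, with $\|h_{t,x}^X\|_\cH^2=\int_{[0,t]^2}p_{|s-r|}(X_s-X_r)\,dsdr$. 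Note also that for $d=1$ the bound $p_{|s-r|}(z)\le(2\pi|s-r|)^{-1/2}$ makes this $\cH$-norm deterministically bounded on $[0,t]^2$, so the exponent in \eqref{e:FK-skorohod} has moments of all orders and $u(t,x)$ is square-integrable; the $\mathcal F_t^W$-measurability is clear since the integral only involves $W$ on $[0,t]$. This is also where $d=1$ enters essentially, as the pathwise object $I_{t,x}=W(h_{t,x}^X)$ only makes sense in this regime (Theorem \ref{convergence} and Theorem \ref{thm:exp}).

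The key identity is the chaos expansion of the stochastic (Wick) exponential: for $h\in\cH$,
\[
\exp\Big(W(h)-\tfrac12\|h\|_\cH^2\Big)=\sum_{n=0}^\infty\frac1{n!}I_n(h^{\otimes n}).
\]
Applying this conditionally with $h=h_{t,x}^X$, and noting $I_{t,x}=W(h_{t,x}^X)$ together with $\tfrac12\|h_{t,x}^X\|_\cH^2=\tfrac12\int_{[0,t]^2}p_{|s-r|}(X_s-X_r)\,dsdr$, I would multiply by $u_0(X_t^x)$ and take $\E_X$. A conditional stochastic Fubini argument then yields
\[
u(t,x)=\sum_{n=0}^\infty\frac1{n!}\,I_n\Big(\E_X\big[u_0(X_t^x)\,(h_{t,x}^X)^{\otimes n}\big]\Big),
\]
the interchange of $\E_X$ with $I_n$ being legitimate because $X$ and $W$ are independent.

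It then remains to identify the kernels. Writing $(h_{t,x}^X)^{\otimes n}(r_1,y_1,\dots,r_n,y_n)=\prod_{j=1}^n\delta(X_{t-r_j}^x-y_j)$ and using the Markov property of the $\alpha$-stable process (Chapman--Kolmogorov), I would order the noise-times $r_{\sigma(1)}<\cdots<r_{\sigma(n)}$, so that the delta functions pin the positions of $X^x$ at the corresponding times $t-r_{\sigma(k)}$ and the expectation factorizes into a product of transition densities $g_\alpha$. Invoking the symmetry $g_\alpha(t,z)=g_\alpha(t,-z)$, this reproduces exactly the symmetrized kernel, i.e.\ $\tfrac1{n!}\E_X[u_0(X_t^x)(h_{t,x}^X)^{\otimes n}]=\tilde f_n(\cdot,t,x)$; with $u_0\equiv1$ the innermost (earliest-time) propagator reduces to $u_0$, matching the convention recorded in \eqref{eq-series}, and for general bounded continuous $u_0$ one simply carries the extra factor $g_\alpha(r_{\sigma(1)},\cdot)*u_0$. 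The kernel produced this way is automatically symmetric in the pairs $(r_j,y_j)$, so no additional symmetrization is required. Consequently the chaos expansion of \eqref{e:FK-skorohod} coincides with \eqref{eq-series}, and by uniqueness of the Wiener chaos decomposition $u(t,x)$ is the unique mild Skorohod solution furnished by Theorem \ref{Thm-solution}.

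I expect the main obstacle to be the rigorous justification of the conditional stochastic Fubini step, namely interchanging $\E_X$ with the infinite sum of multiple Wiener integrals built from the random, $X$-dependent kernel $(h_{t,x}^X)^{\otimes n}$, together with the control of $L^2$-convergence of the resulting series. The convergence itself is ultimately supplied by \eqref{ineq-series-converge} in Theorem \ref{Thm-solution}, while the interchange can be made rigorous by first working with the smooth approximants $A_{t,x}^{\varepsilon,\delta}\in\cH$, for which the chaos expansion and the Fubini interchange are elementary, and then passing to the limit as $(\varepsilon,\delta)\to0$ using the $L^2$-convergence of $I_{t,x}^{\varepsilon,\delta}$ from Theorem \ref{convergence}; the kernel identification is then a routine Markov-property computation once the time ordering is fixed.
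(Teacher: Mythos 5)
Your proposal is correct and is essentially the paper's own proof: the paper's two-line argument simply notes that the $d=1$ ingredients behind Theorem \ref{thm:FK} (namely Theorems \ref{convergence} and \ref{thm:exp}) are in force and then invokes the argument of \cite[Theorem 7.2]{hns11}, which is precisely the Wick-exponential chaos expansion you carry out --- expand $\exp\big(W(h)-\tfrac12\|h\|_{\cH}^2\big)$ conditionally on $X$, exchange $\E_X$ with the multiple Wiener integrals, identify $\tfrac{1}{n!}\E_X\big[u_0(X_t^x)(h_{t,x}^X)^{\otimes n}\big]$ with the kernels $\tilde f_n$ via the Markov property, and conclude by uniqueness of the Wiener chaos expansion together with Theorem \ref{Thm-solution}. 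Your additional remarks (square-integrability from the deterministic bound on the conditional variance when $d=1$, and the fact that the earliest-time factor is really $g_\alpha(r_{\sigma(1)},\cdot)*u_0$, which collapses to $u_0$ under the paper's normalization $u_0\equiv 1$) are accurate refinements rather than deviations.
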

	\begin{proof}
		Note that when $d=1$, Theorem \ref{thm:FK} holds. Then the proof follows directly from the argument used in the proof of \cite[Theorem 7.2]{hns11}.
	\end{proof}
	Theorem \ref{Thm-solution} indicates that the existence and uniqueness of the Skorohod solution hold under the condition $d<2+\alpha$. However, the Feynman-Kac formula \eqref{e:FK-skorohod} for the Skorohod solution of equation \eqref{eq-SPDE} is valid only when $d=1$, as referenced in \cite[ Proposition 3.2]{hns11}. Nevertheless, we can investigate the Feynman-Kac formula for the $p$-th order moments of the Skorohod solution in the case $d<2+\alpha$.

	\begin{theorem}\label{thm:fk-sko-mom}
		Assume $d<2+\alpha$ and $\{u(t,x),t\ge 0,x\in \R^d\}$ is a mild Skorohod solution of \eqref{eq-SPDE}. Then for all positive integer $p$, we have 
		\begin{equation}
			\label{eq-FK-moment}
			\bE \left[u(t,x) ^p \right]
			= \bE \bigg[ \prod_{j=1}^p u_0 (X_t^{(j)}+x) \exp \bigg( \sum_{1 \le j<k \le p} \int_0^t\int_0^tp_{|s-r|} \left( X^{(j)}_{s}-X^{(k)}_{r} \right)dr ds \bigg) \bigg],
		\end{equation}
		where $X^{(1)},\cdots,X^{(p)}$ are independent copies of $X$.
	\end{theorem}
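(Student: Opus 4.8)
The plan is to read off the $p$-th moment directly from the Wiener chaos expansion \eqref{eq-series}, since the Feynman--Kac representation of $u$ itself is unavailable for $d>1$. Writing $u(t,x)=\sum_{n\ge0}I_n(\tilde f_n(\cdot,t,x))$ (with the $n=0$ term equal to $(g_\alpha(t,\cdot)*u_0)(x)$) and expanding the product, I get $\E[u(t,x)^p]=\sum_{n_1,\dots,n_p\ge0}\E[\prod_{i=1}^p I_{n_i}(\tilde f_{n_i})]$. The structural input I would use is the moment (diagram) formula for products of multiple Wiener integrals: for symmetric kernels, $\E[\prod_i I_{n_i}(\tilde f_{n_i})]$ is the sum over all perfect matchings of the legs $\bigsqcup_i\{1,\dots,n_i\}$ \emph{containing no edge internal to a single factor}, weighted by the product of $\langle\cdot,\cdot\rangle_\cH$-contractions along the matched legs. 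The exclusion of self-contractions is exactly the Skorohod/Wick feature that yields the cross-only sum $\sum_{j<k}$ in \eqref{eq-FK-moment}, in contrast to the full double sum $\tfrac12\sum_{j,k}$ (with diagonal) appearing in the Stratonovich moment \eqref{e:fk-mom-str}.

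Next I would resum over $n_1,\dots,n_p$. Introduce independent copies $X^{(1)},\dots,X^{(p)}$ of the $\alpha$-stable process and recall the Feynman--Kac form of the kernel, $\cF f_n=\exp(-ix\cdot\sum_j\xi_j)\prod_j\cF g_\alpha(\cdots)$, which encodes a single path. A matching with $m_{jk}$ edges between copies $j$ and $k$ contributes, after integrating out the kernels, the $\E_X$-expectation of a product of $m_{jk}$ covariance bonds $p_{|s-r|}(X^{(j)}_s-X^{(k)}_r)$ placed at the bond times. Summing over $m_{jk}\ge0$ and over the bond times, and tracking the factorials from the $I_n$-norms, the symmetrizations and the number of matchings, collapses each pair's contribution into $\exp(\int_{[0,t]^2}p_{|s-r|}(X^{(j)}_s-X^{(k)}_r)\,ds\,dr)$, while the path endpoints assemble $\prod_j u_0(X^{(j)}_t+x)$; this produces \eqref{eq-FK-moment}. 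The bookkeeping of these factorials, so that the resummation is a clean exponential with no leftover combinatorial weights, is the most delicate routine step.

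Two analytic points must then be settled. First, every interchange of summation, integration and $\E_X$ must be justified by absolute convergence, controlled by the $\cH^{\otimes n}$-norm bounds from the proof of Theorem \ref{Thm-solution} together with finiteness of the right-hand side of \eqref{eq-FK-moment}. Second, and this is the main obstacle, I must show that for $d<2+\alpha$ the cross-energy exponential $\E[\exp(\sum_{1\le j<k\le p}\int_{[0,t]^2}p_{|s-r|}(X^{(j)}_s-X^{(k)}_r)\,ds\,dr)]$ is finite, which both places $u$ in $L^p$ and legitimizes the resummation. For $p=2$ this is exactly $\E[u^2]=\sum_n n!\|\tilde f_n\|^2_{\cH^{\otimes n}}$, i.e.\ the content of Theorem \ref{Thm-solution}. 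For general $p$ I would expand the exponential and bound each $p$-copy term by the same Fourier/H\"older estimate used to pass from \eqref{eq-convergence for series} to \eqref{last equality}, then invoke the Gamma/Stirling estimate (Lemma C.3 of \cite{rsw23}) at the same threshold; crucially, the absence of diagonal self-energy bonds is what keeps the exponent summable precisely in the range $d<2+\alpha$.

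Finally, the cancellation of the diagonal is most transparent through a regularized noise $\dot W^{\e,\delta}$: conditionally on $X^{(1)},\dots,X^{(p)}$, the relevant Gaussian integrals $V^{(j)}=W(A^{\e,\delta,(j)}_{t,x})$ satisfy $\E_W[\exp(\sum_j V^{(j)}-\tfrac12\sum_j\mathrm{Var}(V^{(j)}\mid X))]=\exp(\sum_{j<k}\mathrm{Cov}(V^{(j)},V^{(k)}\mid X))$, the self-variances cancelling exactly, and the cross-covariances converge to the integrals in \eqref{eq-FK-moment} as $\e,\delta\to0$. This gives an alternative derivation once $u^{\e,\delta}\to u$ and the attendant uniform integrability are controlled, which again reduces to the bound displayed above.
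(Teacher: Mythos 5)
Your proposal is correct in outline, but your primary route is genuinely different from the paper's. The paper does not touch the chaos/diagram expansion for this theorem: for $d\in(1,2+\alpha)$ it mollifies the noise, takes the Feynman--Kac functional \eqref{eq-FK-appro} as a mild solution of the regularized equation \eqref{eq-SPDE-appro} (following \cite[Theorem 5.6]{S17}), computes $\bE[(u^{\varepsilon,\delta})^p]$ by a conditional Gaussian calculation in which the self-variances cancel exactly --- giving \eqref{eq-pth moment} with only the cross inner products $\langle A_{t,x}^{\varepsilon,\delta,(j)},A_{t,x}^{\varepsilon,\delta,(k)}\rangle_{\cH}$ --- and then lets $\varepsilon,\delta\to0$ by dominated convergence, justified by the uniform exponential-integrability bound \eqref{ineq:exp-integral}, which is itself proved by a series expansion using the same Fourier/H\"older estimates as Theorem \ref{Thm-solution} together with Lemma A.3 of \cite{hns11} to absorb the mollifiers $\psi_\delta$. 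This is exactly the ``alternative derivation'' of your final paragraph, so your fallback is the paper's actual proof. Your primary route --- the diagram formula with no self-contractions, then resummation of the cross bonds into $\exp\big(\sum_{j<k}\int_{[0,t]^2}p_{|s-r|}(X^{(j)}_s-X^{(k)}_r)\,ds\,dr\big)$ --- is a sound and standard alternative, and its analytic core (finiteness of the exponential moment of the pairwise cross energy for every $\lambda>0$, to which the $p$-copy case reduces by H\"older) is precisely the $\varepsilon=\delta=0$ instance of \eqref{ineq:exp-integral}, provable by the same estimates. What your route buys: it operates directly on $u$ through its chaos expansion \eqref{eq-series}, so it needs neither the regularized equation nor the convergence $u^{\varepsilon,\delta}\to u$ in $L^p$ --- a step the paper's writeup leaves implicit when it identifies $\lim_{\varepsilon,\delta}\bE[(u^{\varepsilon,\delta})^p]$ with $\bE[u(t,x)^p]$. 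What it costs: the matching/factorial bookkeeping of the resummation and the interchange of the $p$-fold chaos sum with $\bE$, which you rightly flag; the cleanest justifications are positivity (the covariance and the transition-density kernels are nonnegative, so after bounding $|u_0|$ by $\|u_0\|_\infty$ all diagram terms are nonnegative and Tonelli applies) together with hypercontractivity, which upgrades the $L^2$ convergence of the partial sums of \eqref{eq-series} to $L^p$ convergence. The paper's conditional Gaussian identity performs your resummation automatically, which is why it escapes the combinatorics entirely.
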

	
	\begin{remark}
		The formula for the second moment of the solution can be obtained by a direct calculation of the sum of the second moments of chaos (see eq.  \eqref{e:FK-2mom}).
	\end{remark}

	\begin{proof}
		When $d=1$, the desired formula \eqref{eq-FK-moment} follows from \eqref{e:FK-skorohod} directly. For $d\in(1, 2+\alpha)$,  
		we consider the following approximation of \eqref{eq-SPDE}:
		\begin{equation} \label{eq-SPDE-appro}
			\left\{
			\begin{aligned}
				\dfrac{\partial}{\partial t} u^{\varepsilon,\delta}(t,x) =&\, -(-\Delta  )^{\frac{\alpha}{2}}u^{\varepsilon,\delta}(t,x) + u^{\varepsilon,\delta} (t,x) \dot W^{\varepsilon,\delta}(t,x),\\
				u^{\varepsilon,\delta}(0,x)=&\,u_0(x).
			\end{aligned}
			\right.
		\end{equation}
		In accordance with Definition \ref{mild solution}, the mild solution of \eqref{eq-SPDE-appro} satisfies
		\begin{align}\label{mild solution appro}
			u^{\varepsilon,\delta}(t,x) = (g_{\alpha}(t,\cdot)*u_0)(x) + \int_0^t \int_{\bR^d} g_\alpha(t-s,x-y) u^{\varepsilon,\delta}(s,y) \dot W^{\varepsilon,\delta}(s,y)dyds.
		\end{align}
		Using a similar argument as in the proof of \cite[Theorem 5.6]{S17}, we have
		\begin{align} \label{eq-FK-appro}
			u^{\varepsilon,\delta}(t,x)
			= \bE_{X} \bigg[ u_0 (X_t^x) \exp \Big(  W \big( A_{t,x}^{\varepsilon,\delta}\big) - \big\| A_{t,x}^{\varepsilon,\delta} \big\|_{\cH}^2 \Big) \bigg].
		\end{align}
		satisfies \eqref{mild solution appro} and hence is a mild solution of equation \eqref{eq-SPDE-appro}.

		For the $p$-th moment of $u^{\varepsilon,\delta}(t,x)$, we have
		\begin{align} \label{eq-pth moment}
			\bE \Big[ \big(u^{\varepsilon,\delta}(t,x)\big)^p \Big]
			=& \bE \bigg[ \prod_{j=1}^p u_0 \big(X_t^{(j)}+x\big) \exp \bigg( \sum_{j=1}^p W \big( A_{t,x}^{\varepsilon,\delta,(j)} \big) -\frac12 \sum_{j=1}^p \big\| A_{t,x}^{\varepsilon,\delta,(j)} \big\|_{\cH}^2 \bigg) \bigg] \nonumber \\
			=& \bE \bigg[ \bE_{W} \bigg[ \prod_{j=1}^p u_0 \big(X_t^{(j)}+x\big) \exp \bigg( W\Big( \sum_{j=1}^p A_{t,x}^{\varepsilon,\delta,(j)} \Big) - \frac12\sum_{j=1}^p \big\| A_{t,x}^{\varepsilon,\delta,(j)} \big\|_{\cH}^2 \bigg) \bigg] \bigg] \nonumber \\
			=& \bE \bigg[ \prod_{j=1}^p u_0 \big(X_t^{(j)}+x\big) \exp \bigg( \dfrac{1}{2} \Big\|  \sum_{j=1}^p A_{t,x}^{\varepsilon,\delta,(j)} \Big\|_{\cH}^2 - \dfrac{1}{2} \sum_{j=1}^p \big\| A_{t,x}^{\varepsilon,\delta,(j)} \big\|_{\cH}^2 \bigg) \bigg] \nonumber \\
			=& \bE \bigg[ \prod_{j=1}^p u_0 \big(X_t^{(j)}+x\big) \exp \bigg(  \sum_{1 \le j<k\le p} \left\langle A_{t,x}^{\varepsilon,\delta,(j)}, A_{t,x}^{\varepsilon,\delta,(k)} \right\rangle_{\cH} \bigg) \bigg],
		\end{align}
		where $A_{t,x}^{\varepsilon,\delta,(j)}$ is defined by \eqref{def-A} with $X$ replaced by $X^{(j)}$.

		By \eqref{def-A} and \eqref{eq-H norm}, we have
		\begin{align} \label{eq-inner product}
			\left\langle A_{t,x}^{\varepsilon,\delta,(j)}, A_{t,x}^{\varepsilon,\delta,(k)} \right\rangle_{\cH} =& \int_{[0,t]^4}  \int_{\bR^{2d}}  \psi_{\delta}(t-s-u) \psi_{\delta}(t-r-v) p_{|u-v|}(y-z)\\&\hspace{3em}\times p_{\varepsilon}(X^{(j),x}_s-y) p_{\varepsilon}(X^{(k),x}_r-z) dydzdrdsdudv,\nonumber
		\end{align}
		where $X^{(j),x}_s=X_s^{(j)}+x$. By the semigroup property of the heat kernel, we have \begin{align*} \int_{\bR^{2d}}p_{\varepsilon}(X^{(j),x}_s-y) p_{\varepsilon}(X^{(k),x}_r-z) p_{|u-v|}(y-z) dydz 
			= p_{|u-v|+2\varepsilon}(X_s^{(j)}-X_r^{(k)}),
		\end{align*}
		and hence
		\begin{equation}
			\label{e:integral}
			\begin{aligned}
				\left\langle A_{t,x}^{\varepsilon,\delta,(j)}, A_{t,x}^{\varepsilon,\delta,(k)} \right\rangle_{\cH}
				= & \int_{[0,t]^4} \psi_{\delta}(t-s-u) \psi_{\delta}(t-r-v) p_{|u-v|+2\varepsilon}(X_s^{(j)}-X_r^{(k)})drds dudv\\
				= & \int_{[0,t]^4} \psi_{\delta}(u-s) \psi_{\delta}(v-r) p_{|u-v|+2\varepsilon}(X_s^{(j)}-X_r^{(k)})drds dudv,
			\end{aligned}
		\end{equation}
		Therefore, noting 
		\[\lim_{\varepsilon,\delta\to0} \left\langle A_{t,x}^{\varepsilon,\delta,(j)}, A_{t,x}^{\varepsilon,\delta,(k)} \right\rangle_{\cH}
		=  \int_{[0,t]^2} p_{|s-r|}(X_s^{(j)}-X_r^{(k)})drds, \]
		to prove the desired result \eqref{eq-FK-moment} by the dominated convergence theorem, it suffices to show, for $j\neq k$, 
		\begin{align} \label{ineq:exp-integral}  \sup_{\varepsilon,\delta>0}\sup_{t\in[0,T]} \bE \left[ \exp \left( \lambda \Big<A_{t,x}^{\varepsilon,\delta,(j)}, A_{t,x}^{\varepsilon,\delta,(k)} \Big>_{\cH} \right) \right] < \infty, \quad \text{for all} \,\,\lambda > 0.
		\end{align}
		
		By \eqref{e:integral} and the independence between $X^{(j)}$ and $X^{(k)}$ for $j\neq k$, we have
		\begin{align*}
			&\frac1{n!} \E\left[\left(\Big<A_{t,x}^{\varepsilon,\delta,(j)}, A_{t,x}^{\varepsilon,\delta,(k)} \Big>_{\cH}\right)^n\right]
			\\
			& = \frac{1}{n!} \int_{[0,t]^{4n}} \int_ {\bR^{2nd}} f_n(x_{\tau(1)},s_{\tau(1)}, \ldots, x_{\tau(n)},s_{\tau(n)},x,t) \prod_{i=1}^n p_{|u_i-v_i|+2\varepsilon}(x_i-y_i) \psi_\delta(u_i-s_i) \\
			&\hspace{6em}\times \prod_{i=1}^n\psi_\delta(v_i-r_i)f_n(y_{\sigma(1)},t_{\sigma(1)}, \ldots, y_{\sigma(n)},t_{\sigma(n)},x,t)   d\mathbf x d\mathbf y d\mathbf r d\mathbf s  d\mathbf u d \mathbf v \\
			&=\frac{1}{n!} \int_{[0,t]^{4n}} \prod_{i=1}^n \psi_\delta(u_i-s_i)\psi_\delta(v_i-r_i)\int_{\bR^{nd}} \cF f_n(\cdot,r_{\tau(1)}, \ldots, \cdot,r_{\tau(n)},x,t) (\xi_1,\ldots,\xi_n)  \\
			&\hspace{6em} \times \prod_{i=1}^n \cF p_{|u_i-v_i|+2\varepsilon}(\xi_i) \overline{\cF f_n(\cdot,s_{\sigma(1)}, \ldots, \cdot,s_{\sigma(n)},x,t) (\xi_1,\ldots,\xi_n)} d\boldsymbol{\xi}d\mathbf rd\mathbf s  d\mathbf u d \mathbf v
			\\&\leq \frac{1}{n!} \int_{[0,t]^{4n}}\prod_{i=1}^n \psi_\delta(u_i-s_i)\psi_\delta(v_i-r_i) \int_{\bR^{nd}} \prod_{i=1}^n\exp\big({-|u_i-v_i||\xi_i|^2}\big)\\
			&\hspace{6em}  \times \prod_{i=1}^n\exp\left(-c_\alpha\big[(s_{\tau(i+1)}-s_{\tau(i)})+(r_{\sigma(i+1)}-r_{\sigma(i)})\big]|\xi_i|^\alpha\right)d\boldsymbol{\xi}d\mathbf rd\mathbf s  d\mathbf u d \mathbf v,
		\end{align*}
		with $0<s_{\tau(1)}<\ldots<s_{\tau(n)}<t$ and $0<r_{\sigma(1)}<\ldots<r_{\sigma(n)}<t$.  Then following the argument leading to \eqref{eq-H-norm} in the proof of Theorem \ref{Thm-solution}, 
		we get 
		\begin{align*}
			&\frac1{n!} \E\left[\left(\Big<A_{t,x}^{\varepsilon,\delta,(j)}, A_{t,x}^{\varepsilon,\delta,(k)} \Big>_{\cH}\right)^n\right]\\
			&\le \frac{1}{n!}C^n \int_{[0,t]^{4n}}  \prod_{i=1}^n \psi_\delta(u_i-s_i)\psi_\delta(v_i-r_i)\prod_{i=1}^n|u_i-v_i|^{-\frac{d}{2q}}\\
			&\hspace{6em}  \times \prod_{i=1}^n(r_{\sigma(i+1)}-r_{\sigma(i)})^{-\frac{d}{p\alpha}}\prod_{i=1}^n(s_{\tau(i+1)}-s_{\tau(i)})^{-\frac{d}{p\alpha}}d\mathbf r d\mathbf s d\mathbf u d\mathbf v\\
			&\le \frac{1}{n!}C^n \int_{[0,t]^{2n}}  \prod_{i=1}^n(r_{\sigma(i+1)}-r_{\sigma(i)})^{-\frac{d}{p\alpha}}\prod_{i=1}^n(s_{\tau(i+1)}-s_{\tau(j)})^{-\frac{d}{p\alpha}}\prod_{j=1}^n|s_i-r_i|^{-\frac{d}{2q}}d\mathbf r d\mathbf s
		\end{align*}
		where the last step follows from  Lemma A.3 of \cite{hns11}: noting $\frac{d}{2q}\in(0,1)$, 
		\begin{align*}
			\int_{[0,t]^2}  \psi_{\delta}(u-s) \psi_{\delta}(v-r) |u-v|^{-\frac{d}{2q}} dudv
			\le C |s-r|^{-\frac{d}{2q}}.
		\end{align*} 
		Then estimate \eqref{ineq:exp-integral} can be proved by using the same argument as in the proof of Theorem \ref{Thm-solution}. The proof is concluded. 
	\end{proof}
	
	\begin{remark}
		Our proof of \eqref{e:FK-skorohod} is based on the Feynman-Kac formula for the solution and an approximation argument. In the case that the noise $\dot W$ is white time, a proof based on recursive use of It\^o’s formula was applied to obtain the Feynman-Kac formualae for the moments in  \cite{conus13}. 
	\end{remark}

	{\bf Acknowledgement} We wish to thank Davar Khoshnevisan for helpful comments.  J. Song is partially supported by the National Natural Science Foundation of China (nos. 12071256 and 12226001), the Major Basic Research Program of the Natural Science Foundation of Shandong Province in China (nos. ZR2019ZD42 and ZR2020ZD24). W. Yuan is partially supported by  ERC Consolidator Grant 815703 ``STAMFORD: Statistical Methods for High Dimensional Diffusions''.


\begin{thebibliography}{10}
	
	\bibitem{AKQ}
	T.~Alberts, K.~Khanin, and J.~Quastel.
	\newblock The intermediate disorder regime for directed polymers in dimension
	$1+1$.
	\newblock {\em Ann. Probab.}, 42(3):1212--1256, 2014.
	
	\bibitem{bc14}
	Raluca~M Balan and Daniel Conus.
	\newblock A note on intermittency for the fractional heat equation.
	\newblock {\em Statistics \& Probability Letters}, 95:6--14, 2014.
	
	\bibitem{bc16}
	Raluca~M Balan and Daniel Conus.
	\newblock Intermittency for the wave and heat equations with fractional noise
	in time.
	\newblock {\em The Annals of Probability}, pages 1488--1534, 2016.
	
	\bibitem{CSZ17a}
	F.~Caravenna, R.~Sun, and N.~Zygouras.
	\newblock Polynomial chaos and scaling limits of disordered systems.
	\newblock {\em J. Eur. Math. Soc.}, 19:1--65, 2017.
	
	\bibitem{cm94}
	Ren{\'e}~A Carmona and SA~Molchanov.
	\newblock Parabolic anderson problem and intermittency, memoires of the amer.
	math. soc. 108.
	\newblock {\em American Mathematical Society, Rhode Island}, 1994.
	
	\bibitem{chs12}
	Xia Chen, Yaozhong Hu, and Jian Song.
	\newblock Feynman-kac formula for fractional heat equation driven by fractional
	white noise.
	\newblock {\em arXiv:1203.0477}, 2012.
	
	\bibitem{CHSS18}
	Xia Chen, Yaozhong Hu, Jian Song, and Xiaoming Song.
	\newblock Temporal asymptotics for fractional parabolic anderson model.
	\newblock {\em Electron. J. Probab}, 23(14):1--39, 2018.
	
	\bibitem{chsx15}
	Xia Chen, Yaozhong Hu, Jian Song, and Fei Xing.
	\newblock Exponential asymptotics for time--space hamiltonians.
	\newblock {\em Annales de l'IHP Probabilit{\'e}s et statistiques},
	51(4):1529--1561, 2015.
	
	\bibitem{CG2023}
	Yingxia Chen and Fuqing Gao.
	\newblock Scaling limits of directed polymers in spatial-correlated
	environment.
	\newblock {\em Electronic Journal of Probability}, 28:1--57, 2023.
	
	\bibitem{conus13}
	Daniel Conus.
	\newblock Moments for the parabolic {A}nderson model: on a result by {H}u and
	{N}ualart.
	\newblock {\em Commun. Stoch. Anal.}, 7(1):125--152, 2013.
	
	\bibitem{fk13}
	Mohammud Foondun and Davar Khoshnevisan.
	\newblock On the stochastic heat equation with spatially-colored random
	forcing.
	\newblock {\em Transactions of the American Mathematical Society},
	365(1):409--458, 2013.
	
	\bibitem{h02}
	Yaozhong Hu.
	\newblock Chaos expansion of heat equations with white noise potentials.
	\newblock {\em Potential Analysis}, 16(1):45--66, 2002.
	
	\bibitem{hhnt}
	Yaozhong Hu, Jingyu Huang, David Nualart, and Samy Tindel.
	\newblock Stochastic heat equations with general multiplicative gaussian
	noises: H{\"o}lder continuity and intermittency.
	\newblock {\em Electron. J. Probab}, 20(55):1--50, 2015.
	
	\bibitem{hn09}
	Yaozhong Hu and David Nualart.
	\newblock Stochastic heat equation driven by fractional noise and local time.
	\newblock {\em Probability Theory and Related Fields}, 143(1-2):285--328, 2009.
	
	\bibitem{hns11}
	Yaozhong Hu, David Nualart, and Jian Song.
	\newblock Feynman--kac formula for heat equation driven by fractional white
	noise.
	\newblock {\em The Annals of Probability}, pages 291--326, 2011.
	
	\bibitem{davar14}
	Davar Khoshnevisan.
	\newblock {\em Analysis of stochastic partial differential equations}, volume
	119.
	\newblock American Mathematical Soc., 2014.
	
	\bibitem{n06}
	David Nualart.
	\newblock {\em The Malliavin calculus and related topics}, volume 1995.
	\newblock Springer, 2006.
	
	\bibitem{rang2020}
	Guanglin Rang.
	\newblock From directed polymers in spatial-correlated environment to
	stochastic heat equations driven by fractional noise in 1+1 dimensions.
	\newblock {\em Stochastic Processes and their Applications}, 130(6):3408--3444,
	2020.
	
	\bibitem{rsw23}
	Guanglin Rang, Jian Song, and Meng Wang.
	\newblock Scaling limit of a long-range random walk in time-correlated random
	environment.
	\newblock {\em arXiv:2210.01009}, 2022.
	
	\bibitem{SSSX2021}
	Hao Shen, Jian Song, Rongfeng Sun, and Lihu Xu.
	\newblock Scaling limit of a directed polymer among a poisson field of
	independent walks.
	\newblock {\em Journal of Functional Analysis}, 281(5):109066, 2021.
	
	\bibitem{S17}
	Jian Song.
	\newblock On a class of stochastic partial differential equations.
	\newblock {\em Stochastic Processes and their Applications}, 127(1):37--79,
	2017.
	
	\bibitem{swy23}
	Jian Song, Meng Wang, and Wangjun Yuan.
	\newblock Stochastic partial differential equations associated with feller
	processes.
	\newblock {\em arXiv:2310.18726}, 2023.
	
	\bibitem{Walsh86}
	John~B. Walsh.
	\newblock An introduction to stochastic partial differential equations.
	\newblock In {\em \'{E}cole d'\'{e}t\'{e} de probabilit\'{e}s de
		{S}aint-{F}lour, {XIV}---1984}, volume 1180 of {\em Lecture Notes in Math.},
	pages 265--439. Springer, Berlin, 1986.
	
\end{thebibliography}

\end{document}